\numberwithin{equation}{section}
\newtheorem{thm}{Theorem}[section]
\newtheorem{prop}[thm]{Proposition}
\newtheorem{RHP}[thm]{Riemann--Hilbert Problem}
\theoremstyle{definition}
\newtheorem{rem}[thm]{Remark}
\def\PVI{P$_{\text{\rm\scriptsize VI}}$}
\def\Tr{\operatorname{Tr}}
\def\Re{\operatorname{Re}}
\def\dd{{\rm d}}
\begin{document}
\allowdisplaybreaks

\renewcommand{\thefootnote}{}

\renewcommand{\PaperNumber}{093}

\FirstPageHeading

\ShortArticleName{A Riemann--Hilbert Approach to the Heun Equation}

\ArticleName{A Riemann--Hilbert Approach to the Heun Equation\footnote{This paper is a~contribution to the Special Issue on Painlev\'e Equations and Applications in Memory of Andrei Kapaev. The full collection is available at \href{https://www.emis.de/journals/SIGMA/Kapaev.html}{https://www.emis.de/journals/SIGMA/Kapaev.html}}}

\Author{Boris DUBROVIN~$^\dag$ and Andrei KAPAEV~$^\ddag$}

\AuthorNameForHeading{B.~Dubrovin and A.~Kapaev}

\Address{$^\dag$~SISSA, Via Bonomea 265, 34136, Trieste, Italy}
\EmailD{\href{mailto:dubrovin@sissa.it}{dubrovin@sissa.it}}

\Address{$^\ddag$~Deceased}

\ArticleDates{Received February 07, 2018, in final form August 15, 2018; Published online September 07, 2018}

\Abstract{We describe the close connection between the linear system for the sixth Painlev\'e equation and the general Heun equation, formulate the Riemann--Hilbert problem for the Heun functions and show how, in the case of reducible monodromy, the Riemann--Hilbert formalism can be used to construct explicit polynomial solutions of the Heun equation.}

\Keywords{Heun polynomials; Riemann--Hilbert problem; Painlev\'e equations}

\Classification{34M03; 34M05; 34M35; 34M55; 57M50}

\renewcommand{\thefootnote}{\arabic{footnote}}
\setcounter{footnote}{0}

\section{Introduction}

General Heun equation (GHE) \cite{Heun} is the 2nd order linear ODE with four distinct Fuchsian singularities depending on $6$ arbitrary complex parameters. Without loss of generality, three of the singular points can be placed at $0$, $1$ and $\infty$ while the position~$a$ of the fourth singularity remains not fixed. The canonical form of the general Heun equation (GHE) reads
\begin{gather}\label{GHE}
\frac{\dd^2y}{\dd z^2} +\left(\frac{\gamma}{z}+\frac{\kappa}{z-1}+\frac{\epsilon}{z-a}\right)\frac{\dd y}{\dd z}+\frac{\alpha\beta z-q}{z(z-1)(z-a)}y=0,\\
\gamma+\kappa+\epsilon= \alpha+\beta+1,\qquad a\neq0,1,\nonumber
\end{gather}
where the parameters $\alpha$, $\beta$, $\gamma$, $\kappa$, $\epsilon$ determine the characteristic exponents at the singular points,
\begin{alignat*}{3}
& z=0\colon\quad && \{0,1-\gamma\},& \\
& z=1\colon\quad && \{0,1-\kappa\},& \\
& z=a\colon\quad && \{0,1-\epsilon\},& \\
& z=\infty\colon\quad && \{\alpha,\beta\},&
\end{alignat*}
while the remaining {\em accessory} parameter $q$ depends on global monodromy properties of solutions to~(\ref{GHE}).

The general Heun equation together with its confluent and transformed (trigonometric and elliptic) counterparts like Mathieu, spheroidal wave, Leitner--Meixner, Lam\'e and Coulomb sphe\-roi\-dal equations finds numerous applications in quantum and high energy physics, general re\-la\-tivity, astrophysics, molecular physics, crystalline materials, 3d wave in atmosphere, Bethe ansatz systems etc., see~\cite{AD,Hor, SchW, SL} for a~comprehensive but not exhaustive list of references.

Importance of the Heun equation (\ref{GHE}) is due to the fact that any Fuchsian second order linear ODE with $4$ singular points can be reduced to (\ref{GHE}) by elementary transformations, while a trigonometric or elliptic change of the independent variable yield linear ODEs with periodic and double periodic coefficients, see \cite{BE, Ron, St}.

The GHE is the classical example of the Fuchsian ODE that does not admit any continuous isomonodromy deformation. To overcome the difficulty in the construction of the isomonodromy problem, R.~Fuchs \cite{F,F1} added to four conventional Fuchsian singularities one {\em apparent} singularity that is presented in the equation but is absent in the solution. Position $y$ of this apparent singularity is changed together with the position of the Fuchsian singularity~$x$ according to the second order nonlinear ODE known now as the sixth Painlev\'e equation~\PVI. In~\cite{FIKN}, it was observed that under certain assumptions the apparent singularity disappears at the critical values and movable poles of~$y$, and the linear Fuchsian ODE turns into the general Heun equation.

In the present paper, instead of the scalar second order differential equation with apparent fifth singular point we shall use its first order $2\times2$ matrix version with four Fuchsian singular points. We will give a~detailed proof that the general Heun equation appears at the poles of~\PVI, formulate the Riemann--Hilbert (RH) problem for the general Heun equation and explore some implications of the RH problem scheme to the Heun transcendents. In fact, we show how one can obtain the Heun polynomials (i.e., polynomial solutions of~(\ref{GHE})) within the suggested Riemann--Hilbert formalism.

\section[Reduction of the linear differential system for P$_{\text{\rm\scriptsize VI}}$ to the general Heun equation (GHE)]{Reduction of the linear differential system for P$\boldsymbol{_{\text{\rm\bf \scriptsize VI}}}$\\ to the general Heun equation (GHE)}

\subsection[Isomonodromy deformations of a Fuchsian linear ODE with four singularities]{Isomonodromy deformations of a Fuchsian linear ODE\\ with four singularities}

The modern theory of the isomonodromy deformation was developed in the pioneering work of M.~Jimbo, T.~Miwa, and K.~Ueno \cite{JMU, JM}, although its origin goes back to the classical papers of R.~Fuchs \cite{F}, R.~Garnier \cite{Gar,Gar01}, and L.~Schlesinger \cite{Sch}. We shall briefly outline the theory in the case of the $2\times2$ matrix Fuchsian ODE with four singular points. The reader can find more details in the works \cite{JMU, JM}
(see also Part~1 of the monograph~\cite{FIKN}).

The generic first order $2\times2$ matrix Fuchsian ODE with four singular points can be written in the form
\begin{gather}\label{Fuchsian_p6}
\frac{\dd\Psi}{\dd\lambda}=A(\lambda)\Psi,
\end{gather}
with the coefficient matrix
\begin{gather}\label{Lax_pair_p6}
A(\lambda)= \frac{A_1}{\lambda} +\frac{A_2}{\lambda-x} +\frac{A_3}{\lambda-1},\qquad \sum_{j=1}^3A_j=-\delta\sigma_3,\qquad \delta \neq 0,
\end{gather}
where $\sigma_3=\left(\begin{smallmatrix}1 & 0\\ 0 & -1\end{smallmatrix}\right)$. Below we assume that
\begin{gather*}
\Tr A(\lambda) \equiv 0,
\end{gather*}
and denote by
\begin{gather*}
\pm \alpha_j, \qquad j = 1,2,3, \qquad 2\alpha_j \notin {\mathbb Z}
\end{gather*}
the eigenvalues of the matrix residues $A_j$.

The deformation,
\begin{gather*}
A(\lambda) \equiv A(\lambda, x),
\end{gather*}
with respect to the position of the singularity $\lambda=x$ is isomonodromic iff $\Psi(\lambda) \equiv \Psi(\lambda,x)$ satisfies an auxiliary linear ODE with respect to this variable
\begin{gather*}
\frac{\partial\Psi}{\partial x}=B(\lambda)\Psi,\qquad B(\lambda)= -\frac{A_2}{\lambda-x}.
\end{gather*}
In \cite{JM}, it is shown that the unique zero $y \equiv y(x)$ of the entry $A_{12}(\lambda)$( which is a rational function whose numerator is linear in $\lambda$) satisfies the classical sixth Painlev\'e equation \PVI,
\begin{gather*}
y_{xx}=\frac{1}{2}\left(\frac{1}{y}+\frac{1}{y-1}+\frac{1}{y-x}\right)y_x^2-\left(\frac{1}{x}+\frac{1}{x-1}+\frac{1}{y-x}\right)y_x\nonumber\\
\hphantom{y_{xx}=}{} +\frac{y(y-1)(y-x)}{x^2(x-1)^2}\left\{\alpha_0 +\beta_0\frac{x}{y^2}+\gamma_0\frac{x-1}{(y-1)^2}+\delta_0\frac{x(x-1)}{(y-x)^2}\right\}, 
\end{gather*}
where
\begin{gather*}
\alpha_0 = 2\left(\delta - \frac{1}{2}\right)^2, \qquad \beta_0 = -2\alpha_1^2, \qquad \gamma_0 = 2\alpha_3^2, \qquad \delta_0 = -2\left(\alpha_2^2 - \frac{1}{4}\right).
\end{gather*}
Suitable parameterization of the coefficient matrix $A(\lambda)$ of the Fuchsian equation (\ref{Lax_pair_p6}) and appearance of the sixth Painlev\'e equation \PVI\ satisfied by this zero $y(x)$ are explained in more detail in
Appendix~\ref{A}.

In \cite[p.~86]{FIKN}, it was observed that, at the critical values $y=0,1,x$ and movable poles $y=\infty$, the linear matrix equation (\ref{Lax_pair_p6}) with the parametrization~(\ref{parameterization_p6}) becomes equivalent to the GHE. In the following subsections, we describe the way in which the Heun equation emerges at the movable poles of the Painlev\'e functions with more details than in~\cite{FIKN}.

\subsection[Movable poles of P$_{\text{\rm\scriptsize VI}}$]{Movable poles of P$\boldsymbol{_{\text{\rm\bf \scriptsize VI}}}$}


If $\delta\neq\frac{1}{2}$, then equation \PVI\ admits a 2-parameter family of solutions with the following leading terms \cite{GLSh} of the Laurent expansion
\begin{gather}\label{simple_pole_p6}
y(x)=c_{-1}(x-a)^{-1}+c_0+c_1(x-a) +c_2(x-a)^2 +{\mathcal O}\big((x-a)^3\big),
\end{gather}
where $a\in{\mathbb C}\backslash\{0,1\}$, $c_0\in{\mathbb C}$ are arbitrary, while all other coefficients are determined recursively by $a$, $c_0$, $\sigma=\pm 1$ and the local monodromies $\alpha_j$, $j=1,2,3$,
\begin{gather*}
c_{-1}=\sigma\frac{a(a-1)}{2\big(\delta-\frac{1}{2}\big)},\qquad \sigma\in\{1,-1\},\\
c_1=1-\frac{c_0-\frac{1}{3}}{a}-\frac{c_0-\frac{2}{3}}{a-1}+\frac{\sigma}{3}\left(\delta-\frac{1}{2}\right)\left\{
1-\frac{1}{a}\big(6c_0^2-4c_0+1\big)+\frac{1}{a-1}\big(6c_0^2-8c_0+3\big)\right\}\\
\hphantom{c_1=}{} +\frac{\sigma}{2\big(\delta-\frac{1}{2}\big)}
\left\{1-\frac{2}{3}\left(\alpha_2^2-\frac{1}{4}\right)+\frac{2}{3a}\left(\alpha_3^2-\frac{1}{4}\right)-\frac{2}{3(a-1)}\left(\alpha_1^2-\frac{1}{4}\right)\right\},
\end{gather*}
 etc.

If $\delta=\frac{1}{2}$, then the movable poles of solutions to \PVI\ are double \cite{GLSh},
\begin{gather}\label{double_pole_p6}
y(x)=c_{-2}(x-a)^{-2}+c_{-1}(x-a)^{-1}+c_0+{\mathcal O}(x-a),
\end{gather}
where $a\in{\mathbb C}\backslash\{0,1\}$, $c_{-2}\in{\mathbb C}\backslash\{0\}$ are arbitrary, and all other coefficients are determined recursively by $a$ and $c_{-2}$ and the local monodromies $\alpha_j$, $j=1,2,3$,
\begin{gather*}
c_{-1}=\frac{2a-1}{a(a-1)}c_{-2},\\
c_0=\frac{1}{3}(a+1)\\
\hphantom{c_0=}{} +\frac{c_{-2}}{12a^2(a-1)^2}\left(12a(a-1)+1-4a\alpha_1^2+4(a-1)\alpha_3^2-4a(a-1)\left(\alpha_2^2-\frac{1}{4}\right)\right),
\end{gather*}
etc.

\subsection[The coefficient matrix $A(\lambda)$ at the movable poles of P$_{\text{\rm\scriptsize VI}}$]{The coefficient matrix $\boldsymbol{A(\lambda)}$ at the movable poles of P$\boldsymbol{_{\text{\rm\bf \scriptsize VI}}}$}

In this subsection, our concern is the behavior of $A(\lambda)$ at the poles of~$y(x)$. We shall use the parameterization of $A(\lambda)$ given in (\ref{parameterization_p6}). As we will see, the coefficient matrix is continuous at the simple poles with positive $\sigma$ (that is, $\delta\neq\frac{1}{2}$, $\sigma=+1$) and is singular at the poles of any other kind. In the latter case, the linear ODE can be regularized by a suitable Schlesinger transformation~\cite{JM}, and all three resulting regular linear ODEs are equivalent to the GHE.

\begin{thm}\label{Heun_at_poles} At any pole of a solution to {\rm \PVI}, the associated linear ODE~\eqref{Fuchsian_p6} is equivalent $($in some cases after a suitable regularization$)$ to GHE \eqref{GHE}. Moreover, the pole position becomes the position of the fourth singularity in GHE while the free parameter of the Laurent expansion of the Painlev\'e function determines the accessory parameter in GHE.
\end{thm}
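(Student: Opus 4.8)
The plan is to pass from the matrix system \eqref{Fuchsian_p6} to the scalar second order ODE satisfied by the first component $\psi_1$ of a column solution $\Psi=(\psi_1,\psi_2)^{\rm T}$, and then to analyse the confluence of its singular points as $x$ approaches the pole. Eliminating $\psi_2=(\psi_1'-A_{11}\psi_1)/A_{12}$ and using $\Tr A\equiv0$, one finds
\begin{gather*}
\psi_1''-\frac{A_{12}'}{A_{12}}\,\psi_1'+\left(\det A-A_{11}'+A_{11}\frac{A_{12}'}{A_{12}}\right)\psi_1=0 .
\end{gather*}
With the parameterization \eqref{parameterization_p6} the entry $A_{12}$ is rational with a single zero, $A_{12}(\lambda)=k(\lambda-y)/[\lambda(\lambda-x)(\lambda-1)]$, so its logarithmic derivative contributes simple poles at $\lambda=0,x,1$ together with an \emph{extra} pole at $\lambda=y$. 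Thus the scalar equation is Fuchsian with the four genuine singular points $0,x,1,\infty$ and a fifth, apparent, point at $\lambda=y$; that this point is apparent (integer exponent difference, no logarithms) is precisely the Jimbo--Miwa reduction and is forced by the isomonodromy condition.

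First I would treat the generic pole, namely a simple pole with $\sigma=+1$, for which $A(\lambda)$ stays continuous. As $x\to a$ the Painlev\'e function obeys $y(x)\to\infty$, so the apparent singularity $\lambda=y$ escapes to infinity and merges with the genuine singularity at $\lambda=\infty$. The limiting coefficients of the scalar ODE are then regular at every finite point except $0,a,1$, giving a Fuchsian equation with exactly the four singular points $0,a,1,\infty$; the former pole position $a=\lim x$ is precisely the location of the fourth singularity. A diagonal gauge $w=\lambda^{-\alpha_1}(\lambda-1)^{-\alpha_3}(\lambda-a)^{-\alpha_2}\psi_1$ normalizes the lower local exponent to $0$ at $0$, $1$ and $a$ and brings the exponents at $\infty$ to $\{\alpha,\beta\}$, converting the equation into the canonical GHE \eqref{GHE}. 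Reading off the constant part of the coefficient of $w$ then expresses the accessory parameter $q$ through the limiting residues, which by the recursion for \eqref{simple_pole_p6} are governed by the single free Laurent coefficient $c_0$; this yields the asserted dependence $q=q(c_0)$.

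The main obstacle is the remaining poles, where $A(\lambda)$ itself blows up: the simple poles with $\sigma=-1$ and the double poles occurring when $\delta=\tfrac12$, as in \eqref{double_pole_p6}. Here the naive limit of the coefficient matrix diverges, and one must first regularize by a Schlesinger transformation $\Psi\mapsto R(\lambda,x)\Psi$ with $R$ rational in $\lambda$, shifting the local exponents at the colliding points by integers, chosen so that the transformed matrix $\widetilde A=R A R^{-1}+R_\lambda R^{-1}$ admits a finite limit as $x\to a$. The delicate part is to verify that each admissible shift removes the blow-up while keeping $0,a,1,\infty$ Fuchsian and the fifth point absent, and to track how the accessory parameter of the resulting GHE is carried by the leading free coefficient ($c_{-2}$ in the double-pole case).

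Once the regularization is in place the argument parallels the continuous case: the apparent point disappears in the limit and the scalar reduction of $\widetilde A$ is the GHE, with the pole position furnishing the fourth singularity and the free Laurent parameter fixing~$q$. I expect the combinatorics of choosing and composing the three Schlesinger shifts, and the bookkeeping that identifies the limiting $q$ in closed form, to be the technical heart of the proof, while the conceptual mechanism---escape of the apparent singularity---is already transparent in the $\sigma=+1$ case.
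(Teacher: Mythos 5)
Your overall strategy coincides with the paper's: reduce \eqref{Fuchsian_p6} to a scalar second-order equation, observe that the only extra (apparent) singularity sits at the zero $\lambda=y$ of $A_{12}$, show that for $\sigma=+1$ the coefficient matrix is continuous at $x=a$ with the numerator of $A_{12}$ degenerating to a constant (your ``escape of the apparent singularity to infinity'' is exactly the paper's statement that the $12$-entry of \eqref{A_at_simple_pole+_p6} has no $\lambda$ in its numerator, the balance being $\kappa\sim\kappa_0(x-a)$ against $y\sim c_{-1}(x-a)^{-1}$), and regularize the remaining cases by Schlesinger transformations before taking the limit. The gauge you propose is \eqref{u_+_def}, and the identification of the accessory parameter through the limiting residues, hence through $c_0$ (or $c_{-2}$), is the paper's \eqref{Heun_gen_can} combined with \eqref{A_at_simple_pole+_p6_coeffs}.

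Two concrete points, however. First, in the singular cases your closing claim --- that after regularization ``the apparent point disappears in the limit and the scalar reduction of $\widetilde A$ is the GHE'' --- fails if you keep eliminating $\psi_2$ in favour of $\psi_1$: the regularized matrix \eqref{A_regularized_p_at_simple_p6} has $(1,2)$-entry with numerator $\hat b_+\lambda+\hat c_+$, linear in $\lambda$, so the first-row reduction reintroduces an apparent singularity at $\lambda=-\hat c_+/\hat b_+$. It is the $(2,1)$-entry whose numerator is constant, and the Heun equation is satisfied by the \emph{second} row of $\Psi$ (the matrix being the $\sigma_1$-conjugate of \eqref{A_at_simple_pole+_p6}); the same applies to \eqref{A_regularized_p_at_simple_delta=1_p6}. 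Second, the required Schlesinger shifts act at infinity (and, when $\delta=1$, also at the origin, by half-integers rather than integers): $\delta\mapsto\delta-1$ for $\delta\neq\frac{1}{2},1$, $\sigma=-1$; $\delta\mapsto\delta-\frac{1}{2}$, $\alpha_1\mapsto\alpha_1-\frac{1}{2}$ for $\delta=1$; $\delta\mapsto\delta+1$ for $\delta=\frac{1}{2}$. Everything you defer as ``the technical heart'' --- exhibiting these gauges, verifying the finite limits of the stated form, and extracting the accessory parameter in closed form --- is in fact the bulk of the paper's argument (Sections~\ref{2.3.1}--\ref{2.3.5} and Appendices~\ref{A1}--\ref{A4}), so what you have is a correct plan along the paper's lines rather than a complete proof.
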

\begin{rem} The part of this statement concerning the relation of the free parameter of the Laurent expansion of the Painlev\'e function and the accessory parameter in GHE, in the case of $\delta \neq \frac{1}{2}$, has been already established in \cite{LLNZ}. In \cite{LLNZ} the authors are using a very different approach based on the discovered in \cite{LLNZ} remarkable connection of the classical conformal blocks and the sixth Painlev\'e equation. The authors of \cite{LLNZ} also make use of the striking fact (first observed in \cite{Slav}) that the Heun equation can be thought of as the quantization of the classical Hamiltonian of~\PVI.
 \end{rem}

In Sections~\ref{2.3.1}--\ref{2.3.5}, we give the detailed proof of Theorem \ref{Heun_at_poles} considering each case individually.

\subsubsection[$\delta\neq 0,\frac{1}{2}$, and $\sigma=+1$ (regular case)]{$\boldsymbol{\delta\neq 0,\frac{1}{2}}$, and $\boldsymbol{\sigma=+1}$ (regular case)}\label{2.3.1}

In this case, the coefficient matrix remains continuous. In more details, using (\ref{simple_pole_p6}) along with~(\ref{diff_system_p6}), one finds
\begin{gather*}
\kappa=\kappa_0(x-a)+{\mathcal O}\big((x-a)^2\big),\qquad \kappa_0={\rm const},\qquad x\to a
\end{gather*}
(see equation~\eqref{parameterization_p6} in Appendix~\ref{appendixA} for the definition of the functions $\kappa=\kappa(x)$ and $\tilde\kappa=\tilde\kappa(x)$). This zero cancels the simple pole of $y(x)$, so the $(1,2)$-entry of the matrix $A(\lambda)$ remains bounded. Furthermore, the direct computer-aided computation yields the continuity of all other entries as well,
\begin{gather}
A(\lambda)= \frac{a_3\lambda^2+b_3\lambda+c_3} {\lambda(\lambda-1)(\lambda-a)} \sigma_3 +\frac{c_+} {\lambda(\lambda-1)(\lambda-a)} \sigma_+\nonumber\\
\hphantom{A(\lambda)=}{} +\frac{b_-\lambda+c_-} {\lambda(\lambda-1)(\lambda-a)} \sigma_- +{\mathcal O}(x - a),\qquad x\to a. \label{A_at_simple_pole+_p6}
\end{gather}
Here and below $\sigma_+=\left(\begin{smallmatrix}0 & 1\\ 0 & 0\end{smallmatrix}\right)$, $\sigma_-=\left(\begin{smallmatrix}0 & 0\\ 1 & 0\end{smallmatrix}\right)$. The coefficients $a_3$, $b_3$, $c_3$, $c_+$, $b_-$, $c_-$ are expressed in terms of the local monodromies $\delta$, $\alpha_1$, $\alpha_2$, $\alpha_3$, the position of the pole $a$ and the coefficient $c_0$ of the Laurent series (\ref{simple_pole_p6}). Complete details can be found in Appendix~\ref{A1}.

\subsubsection[$\delta\neq 0, \frac{1}{2},1$ and $\sigma=-1$ (generic singular case)]{$\boldsymbol{\delta\neq 0, \frac{1}{2},1}$ and $\boldsymbol{\sigma=-1}$ (generic singular case)}\label{2.3.2}

Now, the function $\kappa$ develops a pole as $x\to a$,
\begin{gather*}
\kappa=\kappa_0(x-a)^{-1}+{\mathcal O}(1),\qquad \kappa_0={\rm const},\qquad x\to a,
\end{gather*}
and the matrix $A(\lambda)$ becomes singular. To overcome this difficulty, one can apply a~suitable Schlesinger transformation~\cite{JM}
\begin{gather*}
A(\lambda)\mapsto R(\lambda) A(\lambda) R(\lambda)^{-1}+R_\lambda(\lambda) R(\lambda)^{-1},
\end{gather*}
that regularizes the equation (\ref{Lax_pair_p6}) as $x\to a$.

Assume first that $\delta\neq1$. The needed Schlesinger transformation shifts the formal monodromy at infinity $\delta$ by $-1$, i.e., $\delta\mapsto\delta-1$; it is given explicitly by
\begin{gather*}
R_0(\lambda)= \begin{pmatrix}
\lambda+\dfrac{1+x-2p-y(2\delta+1)}{2(\delta-1)}&
-\dfrac{\kappa}{2\delta-1}\vspace{1mm}\\
\dfrac{2\delta-1}{\kappa}&0
\end{pmatrix},\qquad \delta\neq\frac{1}{2},1.
\end{gather*}

It is straightforward to check that the transformed matrix $\hat A$ remains regular at the pole $x=a$, and the limiting coefficient matrix is as follows
\begin{gather}
\hat A=\big(R_0AR_0^{-1}+(R_0)_{\lambda}R_0^{-1}\big)\bigr|_{x=a}\nonumber\\
\hphantom{\hat A}{} =\frac{\hat a_3\lambda^2+\hat b_3\lambda+\hat c_3} {\lambda(\lambda-1)(\lambda-a)}\sigma_3+\frac{\hat b_+\lambda+\hat c_+}{\lambda(\lambda-1)(\lambda-a)}\sigma_++\frac{\hat c_-}
{\lambda(\lambda-1)(\lambda-a)}\sigma_-,\label{A_regularized_p_at_simple_p6}
\end{gather}
where expressions for the constant parameters $\hat a_3$, $\hat b_3$, $\hat c_3$, $\hat b_+$, $\hat c_+$, $\hat c_-$ in terms of the parameters~$a$,~$c_0$ and the local monodromies can be found in Appendix~\ref{A2}.

\subsubsection[$\delta=1$, $\sigma=-1$ (the first special singular case)]{$\boldsymbol{\delta=1}$, $\boldsymbol{\sigma=-1}$ (the first special singular case)} \label{2.3.3}

Let us proceed to the case $\delta=1$ and $\sigma=-1$. We choose the Schlesinger transformation that changes the formal monodromy at infinity and at the origin by one half
\begin{gather*}
\delta=1\mapsto\delta-\frac{1}{2}=\frac{1}{2},\qquad \alpha_1\mapsto\alpha_1-\frac{1}{2},
\end{gather*}
and is given by the gauge matrix $R_1(\lambda)$,
\begin{gather*}
R_1(\lambda)= \frac{1}{\sqrt{\lambda}} \begin{pmatrix} \lambda+g&-\kappa\\ -\dfrac{g}{\kappa}&1 \end{pmatrix}, \qquad g=-p-y+\frac{z+\alpha_1x}{y}.
\end{gather*}

The transformed coefficient matrix $\check A$ is regular at the pole $x=a$, and its limiting value is as follows
\begin{gather}
\check A=\big(R_1AR_1^{-1}+{R_1}_{\lambda}R_1^{-1}\big) \bigr|_{x=a}\nonumber\\
\hphantom{\check A}{} =\frac{\check a_3\lambda^2 +\check b_3\lambda+\check c_3}{\lambda(\lambda-1)(\lambda-a)}\sigma_3+\frac{\check b_+\lambda+\check c_+}{\lambda(\lambda-1)(\lambda-a)}\sigma_++\frac{\check b_-}
{(\lambda-1)(\lambda-a)}\sigma_-,\label{A_regularized_p_at_simple_delta=1_p6}
\end{gather}
where the explicit expressions for the constant coefficients are given in Appendix~\ref{A3}.

\subsubsection[$\delta=\frac{1}{2}$]{$\boldsymbol{\delta=\frac{1}{2}}$} \label{2.3.4}

If $\delta=\frac{1}{2}$ then the Laurent expansion with the double pole (\ref{double_pole_p6}) implies that the coefficient matrix $A(\lambda)$ is singular at $x=a$. The chosen regularizing Schlesinger transformation shifts $\delta\mapsto\delta+1$,
\begin{gather*}
R_2(\lambda)=\begin{pmatrix}
0&-\dfrac{2}{\tilde\kappa}\\
\dfrac{\tilde\kappa}{2}&\lambda+g_2
\end{pmatrix}, \qquad g_2=-\frac{1}{3}(2p+2y-2\tilde y+x+1).
\end{gather*}

The transformed matrix, $\tilde{A} = R_2AR_2^{-1}+{R_2}_{\lambda}R_2^{-1}$, is regular at the pole $x=a$ and, at this point, takes the value
\begin{gather}\label{A_regularized_double_p6}
\tilde A(\lambda) =\frac{\tilde a_3\lambda^2 +\tilde b_3\lambda+\tilde c_3} {\lambda(\lambda-1)(\lambda-a)}\sigma_3 +\frac{\tilde c_+} {\lambda(\lambda-1)(\lambda-a)}\sigma_+ +\frac{\tilde b_-\lambda+\tilde c_-} {\lambda(\lambda-1)(\lambda-a)}\sigma_-,
\end{gather}
where the coefficients $\tilde{a}_3$, $\tilde{b}_3$, $\tilde{c}_3$, $\tilde{c}_+$, $\tilde{b}_-$, $\tilde{c}_-$ are presented in Appendix~\ref{A4}.

\subsubsection[GHE from the linear ODEs at the poles of P$_{\text{\rm\scriptsize VI}}$]{GHE from the linear ODEs at the poles of P$\boldsymbol{_{\text{\rm\bf \scriptsize VI}}}$}\label{2.3.5}

In this subsection, we show that all the linear matrix ODEs corresponding to the poles of the sixth Painlev\'e function are equivalent to the GHE.

Observe that the coefficient matrices of the form~(\ref{A_at_simple_pole+_p6}) corresponding to $\delta\neq0,\frac{1}{2},1$ and $\sigma=+1$ as well as the regularized coefficient matrix~(\ref{A_regularized_double_p6}) corresponding to $\delta=\frac{1}{2}$ coincide with each other modulo notations. Similarly, mutatis mutandis, the matrix~(\ref{A_regularized_p_at_simple_p6}) for $\delta\neq0,\frac{1}{2},1$, $\sigma=-1$ is the $\sigma_1$-conjugate of the previous coefficient matrices. The coefficient matrix~(\ref{A_regularized_p_at_simple_delta=1_p6}) for $\delta=1$, $\sigma=-1$, is an inessential modification of~(\ref{A_regularized_p_at_simple_p6}). It is enough then to consider the cases~(\ref{A_at_simple_pole+_p6}) and~(\ref{A_regularized_p_at_simple_delta=1_p6}).

Consider first the coefficient matrix (\ref{A_at_simple_pole+_p6}).

The first order matrix equation for the function $\Psi(\lambda)$ is always equivalent to the second order Fuchsian ODE for the entry $\Psi_{1*}(\lambda)$ of the first row of the matrix function $\Psi(\lambda)$. However, extra (apparent) singularities might appear in the process of excluding the entry $\Psi_{2*}(\lambda)$. In the case of~(\ref{A_at_simple_pole+_p6}), however, the rational function representing the $12$~-- entry of matrix (\ref{A_at_simple_pole+_p6}) does not have~$\lambda$ in its numerator. Hence, when the entry $\Psi_{2*}(\lambda)$ is excluded from the system, no apparent singularities appear. Therefore, in the case~(\ref{A_at_simple_pole+_p6}), the entry $\Psi_{1*}(\lambda)$ of the first row of the matrix function $\Psi(\lambda)$ satisfies a linear 2nd order Fuchsian ODE with $4$ singular points without any apparent singularity
and therefore is equivalent to GHE. It is, in fact, straightforward to check that the function
\begin{gather}\label{u_+_def}
u(\lambda)=\lambda^{\alpha_1}(\lambda-a)^{\alpha_2}(\lambda-1)^{\alpha_3}\Psi_{1*}(\lambda)
\end{gather}
satisfies the general Heun equation in its canonical form (\ref{GHE})
\begin{gather}
u''+\left(\frac{1-2\alpha_1}{\lambda}+\frac{1-2\alpha_2}{\lambda-a}+\frac{1-2\alpha_3}{\lambda-1}\right)u'+\frac{\mu\lambda+\nu}{\lambda(\lambda-a)(\lambda-1)}u=0,\nonumber\\
\mu= (\alpha_1+\alpha_2+\alpha_3-\delta)(\alpha_1+\alpha_2+\alpha_3+\delta-2),\label{Heun_gen_can}\\
\nu=\alpha_1+\alpha_2-(\alpha_1+\alpha_2)^2+\alpha_3^2-\delta^2+a(\alpha_1+\alpha_3-(\alpha_1+\alpha_3)^2+\alpha_2^2-\delta^2)+b_3(2\delta-1).\nonumber
\end{gather}
Observe that the expression for the accessory parameter $\nu$ in~(\ref{Heun_gen_can}) besides the pole position and the local monodromies, involves the coefficient $b_3$ in the parameterization of the entry~$A_{11}$. Thus, taking into account formula for $b_3$ in~(\ref{A_at_simple_pole+_p6_coeffs}), we see that the accessory parameter $\nu$ is determined by the free coefficient~$c_0$ (or~$c_{-2}$ in the case~(\ref{A_regularized_double_p6}~--
see (\ref{hatA_coeffs_double_p6}))) in the Laurent expansion of the sixth Painlev\'e transcendent.

For the coefficient matrix (\ref{A_regularized_p_at_simple_p6}), corresponding to $\delta \neq 1$, $\sigma=-1$ a similar statement is valid for the entries of the second row of $\Psi(\lambda)$,
\begin{gather*}
v(\lambda)=\lambda^{\alpha_1} (\lambda-a)^{\alpha_2}(\lambda-1)^{\alpha_3}\Psi_{2*}(\lambda),\nonumber\\
v''+\left(\frac{1-2\alpha_1}{\lambda}+\frac{1-2\alpha_2}{\lambda-a}+\frac{1-2\alpha_3}{\lambda-1}\right)v'+\frac{\hat\mu\lambda+\hat\nu}{\lambda(\lambda-a)(\lambda-1)}v=0,\nonumber\\
\hat\mu=(\alpha_1+\alpha_2+\alpha_3-\delta-1)(\alpha_1+\alpha_2+\alpha_3+\delta-1),\nonumber\\
\hat\nu=\alpha_1+\alpha_2-(\alpha_1+\alpha_2)^2+\alpha_3^2-(\delta-1)^2\nonumber\\
\hphantom{\hat\nu=}{} +a\big(\alpha_1+\alpha_3-(\alpha_1+\alpha_3)^2+\alpha_2^2-(\delta-1)^2\big)+\hat b_3(2\delta-1).
\end{gather*}

In the case (\ref{A_regularized_p_at_simple_delta=1_p6}) corresponding to $\delta=1$, $\sigma=-1$, the function
\begin{gather*}
\check v(\lambda)=\lambda^{\alpha_1-\frac{1}{2}}(\lambda-a)^{\alpha_2}(\lambda-1)^{\alpha_3}\Psi_{2*}(\lambda)
\end{gather*}
satisfies the following Heun equation
\begin{gather*}
\check v''+\left( \frac{1-2\alpha_1}{\lambda}+\frac{1-2\alpha_2}{\lambda-a}+\frac{1-2\alpha_3}{\lambda-1}\right)\check v' +\frac{\check\mu\lambda+\check\nu}{\lambda(\lambda-1)(\lambda-a)}\check v=0,\nonumber\\
\check\mu=(\alpha_1+\alpha_2+\alpha_3-2)(\alpha_1+\alpha_2+\alpha_3),\nonumber\\
\check\nu=b_3-\frac{1}{2}(a+1)+\alpha_1+\alpha_2-(\alpha_1+\alpha_2)^2+\alpha_3^2+a\big(\alpha_1+\alpha_3-(\alpha_1+\alpha_3)^2+\alpha_2^2\big). 
\end{gather*}

This completes the proof of Theorem~\ref{Heun_at_poles}.

\section{Riemann--Hilbert problem approach to the Heun equation}

Main result of this section is the formulation of the RH problem for the general Heun functions in the generic case
\begin{gather*}
2\alpha_1,2\alpha_2,2\alpha_3,2\delta\notin{\mathbb Z}.
\end{gather*}
We shall start, following closely references \cite{FIKN, J}, with the standard definition of the monodromy data for Fuchsian system (\ref{Fuchsian_p6}), (\ref{Lax_pair_p6}) and with the related Riemann--Hilbert problem for the sixth Painlev\'e equation.

\subsection{Monodromy data}

\begin{figure}[t]\centering
\begin{tikzpicture}
\node at (0,0) {\includegraphics[scale=0.9]{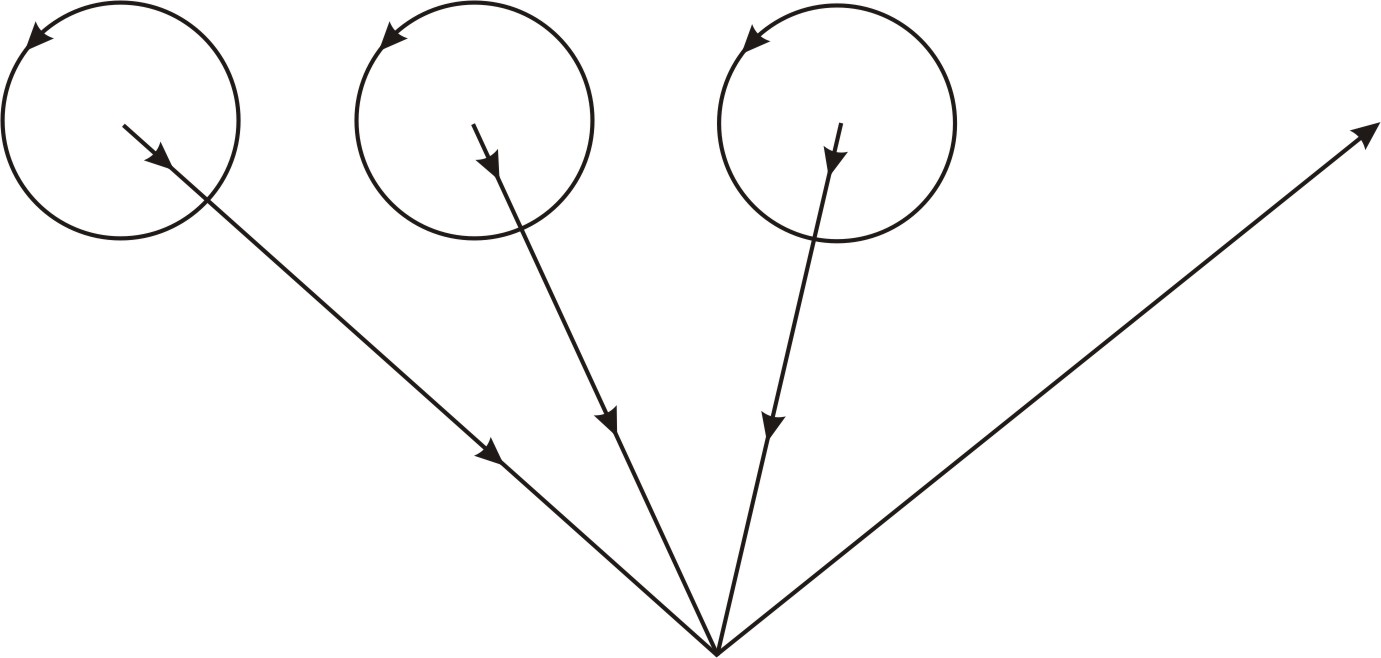}};

\node at (-4.6,1.6) {$\lambda_1$}; \node at (-4.6,2.7) {$E_1$}; \node at (-2.0,-1.0) {$M_1$}; \node at (4.8,1.6) {$\infty$};

\node at (-1.9,1.6) {$\lambda_2$}; \node at (-1.9,2.7) {$E_2$}; \node at (-0.2,-0.7) {$M_2$};

\node at (0.85,1.6) {$\lambda_3$}; \node at (0.85,2.7) {$E_3$}; \node at (1.0,-0.8) {$M_3$}; \node at (2.7,-0.9) {$M_\infty$};

\node at (0.3,-2.8) {$\lambda_0$};


\end{tikzpicture}

\caption{The jump contour $\gamma$ for the RH Problem~\ref{RHP_p6}.}\label{fig1}
\end{figure}

Let $\lambda_1=0$ and $\lambda_3=1$. Then fix a point $\lambda_2=x\in{\mathbb C}\backslash\{0,1,\infty\}$, choose a base point $\lambda_0\in{\mathbb C}\backslash\{0,1,x,\infty\}$ and cut the complex plane along the segments
\begin{gather*}
[\lambda_0,\lambda_1]\cup [\lambda_0,\lambda_2]\cup [\lambda_0,\lambda_3]\cup [\lambda_0,\infty].
\end{gather*}
Encircle the points $\lambda_1=0$, $\lambda_2=x$ and $\lambda_3=1$ using non-intersecting circles ${\mathcal C}_j$, $j = 1,2,3$. Denote $\gamma$ the graph
\begin{gather*}
\gamma = [\lambda_0,\lambda_1]\cup [\lambda_0,\lambda_2]\cup [\lambda_0,\lambda_3]\cup [\lambda_0,\infty]\cup \cup_{j=1}^3{\mathcal{C}}_j
\end{gather*}
and orient it as in Fig.~\ref{fig1}. Denote also $D_j$, $j = 1,2,3$ the interiors of the circles ${\mathcal C}_j$ and $D_{\infty}$ the domain
\begin{gather*}
D_{\infty} = \bigl({\mathbb C}\backslash\gamma\bigr) \backslash\bigl(\cup_{j=1}^3D_j\bigr)
\end{gather*}
The domains $D_j$ are assigned to the principal branches of the Frobenius (canonical) solutions to~(\ref{Fuchsian_p6}) at the Fuchsian singular points defined by the conditions
\begin{gather}
\Psi_j(\lambda)= T_j(I+{\mathcal O}(\lambda-\lambda_j))(\lambda-\lambda_j)^{\alpha_j\sigma_3},\nonumber\\ \lambda \to \lambda_j, \qquad \lambda \in D_j, \qquad j =1,2,3,\qquad \det T_j=1,\nonumber\\
\Psi_{\infty}=\big(I+{\mathcal O}\big(\lambda^{-1}\big)\big)\lambda^{-\delta\sigma_3},\qquad \lambda \to \infty,\qquad \lambda \in D_{\infty}.\label{Frobenius_sol_def}
\end{gather}
Here, the branches of $(\lambda-\lambda_j)^{\alpha_j}$, $j=1,2,3$, and $\lambda^{-\delta}$ are fixed by the condition
\begin{gather*}
\arg(\lambda-\lambda_j)\to\pi,\qquad \arg\lambda\to\pi,\qquad \mbox{as} \quad \lambda\to-\infty.
\end{gather*}

Given a pair of the characteristic exponents $(\alpha_j,-\alpha_j)$, the matrix of eigenvectors $T_j\in {\rm SL}(2,{\mathbb C})$ is determined up to a right diagonal factor. In contrast, the Frobenius solution $\Psi_{\infty}(\lambda)$ is normalized and therefore, as soon as the pair $(-\delta,\delta)$ is fixed, it is determined uniquely.

The matrices of the local monodromy are defined as the branch matrices of the Frobenius solutions
\begin{gather*}
\Psi_j\big(\lambda_j+(\lambda-\lambda_j){\rm e}^{2\pi {\rm i}}\big)= \Psi_j(\lambda){\rm e}^{2\pi {\rm i}\alpha_j\sigma_3},\qquad \Psi_{\infty}\big({\rm e}^{2\pi {\rm i}}\lambda\big)= \Psi_{\infty}(\lambda){\rm e}^{-2\pi {\rm i}\delta\sigma_3}.
\end{gather*}
Introduce also the connection matrices between Frobenius solutions at infinity and at the finite singularities
\begin{gather*}
\Psi_j(\lambda)=\Psi_{\infty}(\lambda)E_j.
\end{gather*}
Similar to $\Psi_j(\lambda)$, the connection matrices $E_j$ are determined modulo arbitrary right diagonal factors. In contrast, the {\it monodromy matrices} $M_j$,
\begin{gather}\label{monodromy_matrices}
M_j=E_j{\rm e}^{2\pi {\rm i}\alpha_j\sigma_3}E_j^{-1},
\end{gather}
are determined uniquely. The monodromy matrices are the branching matrices of the solu\-tion~$\Psi_{\infty}(\lambda)$ at the singular points $\lambda_j$, $j = 1,2,3$; namely, one has that
\begin{gather}\label{global_monodromies}
\Psi_{\infty}\big(\lambda_j+(\lambda-\lambda_j){\rm e}^{2\pi {\rm i}}\big)= \Psi_{\infty}(\lambda)M_j,\qquad j = 1,2,3.
\end{gather}
Together with the matrix
\begin{gather}\label{Minfty}
M_\infty := {\rm e}^{-2\pi {\rm i}\delta\sigma_3}
\end{gather}
they generate the {\it monodromy group} of equation (\ref{Fuchsian_p6})
\begin{gather*}
{\mathbb M} = \langle M_1, M_2, M_3, M_\infty\rangle , \qquad M_j \in {\rm SL}(2,{\mathbb C}),
\end{gather*}
and are subject of one (cyclic) constraint
\begin{gather}\label{cyclic}
M_1M_2M_3=M_{\infty}.
\end{gather}

\looseness=-1 Given the local monodromies, each of the monodromy matrices $M_j$ depends on 2 parameters. The total set of the 6 parameters determining the monodromy matrices $M_j$, $j=1,2,3,\infty$, is subject to a system of 3 scalar constraints. Thus the parameter set of the monodromy data involves generically $3$ parameters. One of these parameters corresponds to the constant factor~$\kappa_0$ determining the auxiliary function $\kappa$~-- see (\ref{parameterization_p6}), (\ref{diff_system_p6}). This is a reflection of the possible conjugation of $A(\lambda)$ by a constant diagonal matrix~-- the action which does not affect the zero of $A_{12}(\lambda)$, i.e., the \PVI\ function~$y(x)$. Neglecting this auxiliary parameter, the {\it space of essential monodromy data}, ${\mathcal M}$, is invariant with respect to an overall conjugation by a diagonal matrix and can be identified with an algebraic variety~-- the monodromy surface, of dimension~2 (see below equation~(\ref{Fricke_cubic})). At the same time, the full space of monodromy data, can be represented as
\begin{gather*}
{\frak M}={\mathcal M}\times{\mathbb C},\qquad \dim{\mathcal M}=2.
\end{gather*}

In \cite{J}, M.~Jimbo has proposed a parameterization of the 2-dimensional monodromy sur\-face~${\mathcal M}$ by the trace coordinates invariant with respect to the overall diagonal conjugation. Namely, letting
\begin{gather*}
a_j=\Tr M_j=2\cos(2\pi\alpha_j),\qquad j=1,2,3,\infty,\qquad \alpha_{\infty}=\delta,\nonumber\\
 t_{ij}=\Tr(M_iM_j)=2\cos(2\pi\sigma_{ij}),\qquad i,j=1,2,3,
\end{gather*}
one finds the relation between all these parameters for a 2-dimensional surface called the Fricke cubic
\begin{gather}
t_{12}t_{23}t_{31} +t_{12}^2+t_{23}^2+t_{31}^2-(a_1a_2+a_3a_{\infty})t_{12}-(a_2a_3+a_1a_{\infty})t_{23}-(a_3a_1+a_2a_{\infty})t_{31}\nonumber\\
\qquad {}+a_1^2 +a_2^2+a_3^2+a_{\infty}^2+a_1a_2a_3a_{\infty}-4=0.\label{Fricke_cubic}
\end{gather}
According to \cite{Iwasaki}, apart from the singular points of the surface (\ref{Fricke_cubic}), the monodromy matrices can be written explicitly in terms of the variables $t_{ij}$. Exact formulae can be found in~\cite{J}
and~\cite{Iwasaki}.

Each point of the surface $\mathcal M$ represents an isomonodromic family of equations (\ref{Fuchsian_p6}) which in turns generates a solution $y(x)$ of the Painlev\'e VI equation. Hence, the \PVI\ transcendents can be parameterized by the points of $\mathcal M$. In fact, at generic points of the Fricke cubic one can use any pair of the parameters~$t_{ij}$ or~$\sigma_{ij}$ to parameterize the set of the corresponding Painlev\'e functions. For instance, one can choose,
\begin{gather}\label{tsdef}
t := t_{12} = \Tr(M_1M_2), \qquad s:= t_{1,3}=\Tr(M_1M_3),
\end{gather}
so that we have the parameterization of the \PVI\ functions by the pair $(t,s)$,
\begin{gather}\label{yts}
y\equiv y(x;t,s).
\end{gather}
It also should be mentioned that some of the physically important solutions, e.g., the so-called classical solutions to \PVI, correspond to non-generic points of the monodromy data set and for their parameterization one can use the full monodromy space ${\frak M}$. We refer to~\cite{Guz2} for more detail on this issue.

\subsection[Riemann--Hilbert problem for P$_{\text{\rm\scriptsize VI}}$]{Riemann--Hilbert problem for P$\boldsymbol{_{\text{\rm\bf \scriptsize VI}}}$}

The {\it inverse monodromy problem}, i.e., the problem of reconstruction of the function~$\Psi$, and hence of the corresponding Painlev\'e function $y(x)$, from their monodromy data is formulated as a~Riemann--Hilbert (RH) problem. The direct and inverse monodromy problems associated with the equation \PVI\ were studied by several authors. We mention here the pioneering paper~\cite{J}, and subsequent papers~\cite{Bo, DM, Guz2}.

We shall now formulate precisely the Riemann--Hilbert problem corresponding to the inverse monodromy problem for Fuchsian $2\times 2$ system~(\ref{Fuchsian_p6}).

\begin{RHP}\label{RHP_p6} Given $x$, $\delta$, $\alpha_j$, $2\delta,2\alpha_j\notin{\mathbb Z}$, $j=1,2,3$, the oriented graph~$\gamma$ shown in Fig.~{\rm \ref{fig1}} $($with $\lambda_1 =0$, $\lambda_2 =x$, and $\lambda_3 =1)$ and the jump matrices $M_j$, $E_j$, $j=1,2,3$, all assigned to the branches of $\gamma$ and satisfying the conditions \eqref{monodromy_matrices}, \eqref{Minfty}, \eqref{cyclic}, find a piecewise holomorphic $2\times2$ matrix function $\Psi(\lambda)$ with the following properties:
\begin{enumerate}\itemsep=0pt
\item[$1)$] $\|\Psi(\lambda)\lambda^{\delta\sigma_3}-I\| \leq C|\lambda|^{-1}$ where $C$ is a constant and $\lambda\to\infty$,
\item[$2)$] $\|\Psi(\lambda)(\lambda-\lambda_j)^{-\alpha_j\sigma_3}\|\leq C_j$ as $\lambda\to\lambda_j$, where $C_j$ are some constants, $j=1,2, 3$,
\item[$3)$] $\|\Psi(\lambda)\|\leq C_0$, where $C_0$ is a constant and $\lambda$ approaches any nodal point of the graph~$\gamma$,
\item[$4)$] across the piecewise oriented contour $\gamma$, the discontinuity condition holds,
\begin{gather*}
\Psi_+(\lambda)=\Psi_-(\lambda)G(\lambda),\qquad \lambda\in\gamma,
\end{gather*}
where $\Psi_+(\lambda)$ and $\Psi_-(\lambda)$ are the left and right limits of $\Psi(\lambda)$ as $\lambda$ transversally approaches the contour $\gamma$, and $G(\lambda)$ is the piecewise constant matrix defined on $\gamma$, see Fig.~{\rm \ref{fig1}}.
\end{enumerate}
\end{RHP}
\begin{prop} If a solution to the RH Problem~{\rm \ref{RHP_p6}} exists it is unique.
\end{prop}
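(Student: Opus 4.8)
The plan is to run the classical Liouville-type uniqueness argument for Riemann--Hilbert problems. Suppose $\Psi^{(1)}$ and $\Psi^{(2)}$ both solve RH Problem~\ref{RHP_p6} for the same data, and consider the matrix ratio $X(\lambda)=\Psi^{(1)}(\lambda)\big(\Psi^{(2)}(\lambda)\big)^{-1}$. To make this well defined I must first know that $\Psi^{(2)}$ is everywhere invertible, so the initial task is to prove that every solution $\Psi$ satisfies $\det\Psi\equiv 1$.

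For the determinant, I would argue as follows. All jump matrices $M_j$, $E_j$ are unimodular (indeed $M_j\in\mathrm{SL}(2,\mathbb{C})$, and $\det E_j=1$ as well), so $\det G(\lambda)\equiv 1$ on $\gamma$; condition~4) then gives $\det\Psi_+=\det\Psi_-$, hence $\det\Psi$ continues holomorphically across $\gamma$. Near each $\lambda_j$, condition~2) says $\Psi(\lambda)(\lambda-\lambda_j)^{-\alpha_j\sigma_3}$ is bounded, and since $\det(\lambda-\lambda_j)^{\alpha_j\sigma_3}=1$ the scalar $\det\Psi$ equals the determinant of this bounded factor, so it stays bounded and the singularity at $\lambda_j$ is removable; condition~3) gives the same at the nodal points of $\gamma$. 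Finally condition~1) gives $\Psi(\lambda)\lambda^{\delta\sigma_3}\to I$ with $\det\lambda^{-\delta\sigma_3}=1$, so $\det\Psi\to 1$ as $\lambda\to\infty$. Thus $\det\Psi$ is a bounded entire function tending to $1$, whence $\det\Psi\equiv 1$ by Liouville's theorem.

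Having secured invertibility, I would analyze $X$. Across $\gamma$ the common jump cancels, since $\Psi^{(i)}_+=\Psi^{(i)}_-G$ yields $X_+=\Psi^{(1)}_-GG^{-1}\big(\Psi^{(2)}_-\big)^{-1}=X_-$, so $X$ is holomorphic across the contour. At each $\lambda_j$ I write $\Psi^{(i)}=O^{(i)}(\lambda-\lambda_j)^{\alpha_j\sigma_3}$ with $O^{(i)}$ bounded; the singular factors cancel, giving $X=O^{(1)}\big(O^{(2)}\big)^{-1}$, and because $\det O^{(2)}=\det\Psi^{(2)}=1$ its inverse (the adjugate, for a $2\times2$ matrix) is also bounded, so $X$ is bounded and the singularity is removable. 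The same boundedness argument, now invoking condition~3) together with $\det\Psi^{(2)}\equiv1$, removes the nodal-point singularities. At infinity, condition~1) gives $\Psi^{(i)}=\big(I+O(\lambda^{-1})\big)\lambda^{-\delta\sigma_3}$, so the $\lambda^{-\delta\sigma_3}$ factors cancel and $X=\big(I+O(\lambda^{-1})\big)\big(I+O(\lambda^{-1})\big)^{-1}=I+O(\lambda^{-1})\to I$. Hence $X$ is entire, bounded, and tends to $I$, so the matrix Liouville theorem forces $X\equiv I$, i.e.\ $\Psi^{(1)}\equiv\Psi^{(2)}$.

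I expect the main obstacle to be the first step, establishing $\det\Psi\equiv 1$, since this is what legitimizes forming the ratio and, in particular, requires checking that the merely weighted bounds in conditions~2) and 3) genuinely force the isolated singularities of the determinant at the $\lambda_j$ and at the nodal points of $\gamma$ to be removable. Once invertibility is in hand, the cancellation of the jumps and of the local singular factors is entirely routine.
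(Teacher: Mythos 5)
Your argument is correct and is precisely the standard Liouville-type uniqueness proof that the paper invokes without writing out (it simply refers the reader to the monograph of Fokas--Its--Kapaev--Novokshenov for these ``standard'' proofs): first $\det\Psi\equiv 1$ via unimodularity of the jumps and removability of the isolated singularities, then $X=\Psi^{(1)}\bigl(\Psi^{(2)}\bigr)^{-1}$ is entire, tends to $I$, and hence equals $I$. No gaps; your treatment of the weighted local bounds at the $\lambda_j$ and at the nodal points is exactly the care the standard argument requires.
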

\begin{prop} Having the canonical solutions $\Psi_j(\lambda)$ of \eqref{Fuchsian_p6}, the equations
\begin{gather}\label{Psidef}
\Psi(\lambda) = \Psi_{j}(\lambda), \qquad \lambda\in D_j, \qquad j = 1,2,3,\infty,
\end{gather}
define the function which solves the RH Problem~{\rm \ref{RHP_p6}} whose data are determined by the corresponding monodromy data.
\end{prop}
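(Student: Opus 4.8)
The plan is to verify directly that the piecewise-holomorphic matrix $\Psi$ assembled by~\eqref{Psidef} meets each of the four requirements of RH Problem~\ref{RHP_p6} and carries the prescribed monodromy. Since every canonical solution $\Psi_j$ is a fundamental matrix of the Fuchsian system~\eqref{Fuchsian_p6}, it is holomorphic and single-valued on its cut domain $D_j$; hence $\Psi$ is piecewise holomorphic on ${\mathbb C}\setminus\gamma$, and it remains to check the growth at infinity, the local behaviour at the finite singularities, the boundedness at the nodal points, and the jumps across $\gamma$.

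First I would dispose of the normalization conditions, which are immediate from the defining relations~\eqref{Frobenius_sol_def}. On $D_\infty$ one has $\Psi=\Psi_\infty=\big(I+{\mathcal O}\big(\lambda^{-1}\big)\big)\lambda^{-\delta\sigma_3}$, so $\Psi\lambda^{\delta\sigma_3}-I={\mathcal O}\big(\lambda^{-1}\big)$, which is condition~1. On $D_j$, $j=1,2,3$, one has $\Psi=\Psi_j=T_j(I+{\mathcal O}(\lambda-\lambda_j))(\lambda-\lambda_j)^{\alpha_j\sigma_3}$, whence $\Psi(\lambda)(\lambda-\lambda_j)^{-\alpha_j\sigma_3}=T_j(I+{\mathcal O}(\lambda-\lambda_j))$ stays bounded as $\lambda\to\lambda_j$, giving condition~2. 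Condition~3 follows because every nodal point of $\gamma$ --- the base point $\lambda_0$ and the points where the circles ${\mathcal C}_j$ meet the cuts --- is a regular (non-singular) point of~\eqref{Fuchsian_p6}, at which the relevant fundamental solutions extend holomorphically and are therefore bounded.

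The substance of the argument is the jump relation (condition~4), which I would verify branch by branch, reading off the prescribed matrices from Fig.~\ref{fig1}. Across each circle ${\mathcal C}_j$ the two boundary values are $\Psi_\infty$ (from $D_\infty$) and $\Psi_j$ (from $D_j$), so the connection relation $\Psi_j=\Psi_\infty E_j$ identifies the jump with $E_j$. Across the portion of the cut $[\lambda_0,\lambda_j]$ lying in $D_\infty$ both boundary values are branches of $\Psi_\infty$, and encircling $\lambda_j$ produces the global monodromy~\eqref{global_monodromies}, so the jump is $M_j$; across $[\lambda_0,\infty]$ the analogous computation with $M_\infty={\rm e}^{-2\pi {\rm i}\delta\sigma_3}$ from~\eqref{Minfty} gives the jump $M_\infty$. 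Across the portion of the cut inside $D_j$ the two branches of $\Psi_j$ differ by the local monodromy ${\rm e}^{2\pi {\rm i}\alpha_j\sigma_3}$. In each case one must track the orientation assigned in Fig.~\ref{fig1} to decide whether a jump matrix or its inverse occurs, and confirm that the outcome is exactly the piecewise-constant $G(\lambda)$ of the statement.

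The main obstacle --- and the only place where the data enter globally rather than locally --- is the compatibility of these jumps at the nodal points, which is what guarantees that $\Psi$ is genuinely single-valued off $\gamma$ rather than merely piecewise defined. At each intersection of a circle with a cut the ordered product of the incident jump matrices must reduce to the identity; by~\eqref{monodromy_matrices} this amounts to $M_j=E_j{\rm e}^{2\pi {\rm i}\alpha_j\sigma_3}E_j^{-1}$, which holds by definition. The decisive compatibility is at the base point $\lambda_0$: traversing a small loop about $\lambda_0$ crosses all four cuts, and the ordered product of the incident jumps $M_1$, $M_2$, $M_3$ and $M_\infty^{-1}$ must be trivial --- this is precisely the cyclic constraint~\eqref{cyclic}, $M_1M_2M_3=M_\infty$. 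Once this holds, the assembled $\Psi$ has no spurious branching and solves RH Problem~\ref{RHP_p6}; and because the jumps were computed from $E_j$ and $M_j$, it carries exactly the monodromy data of~\eqref{Fuchsian_p6}.
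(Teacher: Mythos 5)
Your proposal is correct and follows exactly the standard verification that the paper itself omits, stating only that ``the proofs of these propositions are standard'' with a reference to the monograph of Fokas--Its--Kapaev--Novokshenov. The direct check of the normalization and boundedness conditions, the branch-by-branch identification of the jumps with $E_j$, $M_j$ and ${\rm e}^{2\pi {\rm i}\alpha_j\sigma_3}$, and the observation that consistency at the nodes reduces to \eqref{monodromy_matrices} and the cyclic constraint \eqref{cyclic} is precisely the intended argument.
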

\begin{prop} Conversely, if for given $x$, $\delta$, $\alpha$, and matrices $M_j$, $E_j$ the RH Problem~{\rm \ref{RHP_p6}} is solvable then the function $\Psi(\lambda)$ satisfies the Fuchsian system \eqref{Fuchsian_p6} whose Frobenius solutions are determined by the solution~$\Psi(\lambda)$ of the RH problem according to the equations~\eqref{Psidef} $($read backwards$)$ and whose monodromy data coincide with the given RH data.
\end{prop}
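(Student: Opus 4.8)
The plan is to reconstruct the coefficient matrix directly from $\Psi$ as its logarithmic derivative, and then to read off both the Fuchsian structure and the monodromy data from the normalizations prescribed in the RH problem. First I would establish that $\det\Psi\equiv1$. Every jump matrix $G(\lambda)$ on $\gamma$ lies in ${\rm SL}(2,\mathbb{C})$, so $\det\Psi$ has no jump across $\gamma$; by conditions~2) and~3) it remains bounded at the singular and nodal points, and by condition~1) it tends to $1$ at infinity. Liouville's theorem then forces $\det\Psi\equiv1$, so $\Psi(\lambda)$ is invertible wherever it is holomorphic.

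Next I would set $A(\lambda):=\Psi_\lambda(\lambda)\Psi(\lambda)^{-1}$. Because each jump matrix is piecewise constant, differentiating $\Psi_+=\Psi_-G$ gives $\Psi_{+,\lambda}=\Psi_{-,\lambda}G$, whence $A_+=\Psi_{+,\lambda}\Psi_+^{-1}=\Psi_{-,\lambda}GG^{-1}\Psi_-^{-1}=A_-$; thus $A$ has no jump across $\gamma$ and extends to a single-valued function, holomorphic off $\{0,x,1\}$ and off the nodal points. At the nodal points I would invoke consistency of the prescribed data: the relations \eqref{monodromy_matrices}, \eqref{Minfty}, \eqref{cyclic} guarantee that the ordered product of jumps encircling each node equals $I$, so $\Psi$ continues single-valuedly around the node and, being bounded there by condition~3), extends holomorphically; hence $A$ is holomorphic at every nodal point as well.

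The local exponents then come straight from the normalizations. Near $\lambda_j$ I would set $\hat\Psi_j(\lambda):=\Psi(\lambda)(\lambda-\lambda_j)^{-\alpha_j\sigma_3}$: by the assignment of jumps on the cut inside $D_j$ this function is single-valued, by condition~2) it is bounded, hence holomorphic at $\lambda_j$, and $\det\hat\Psi_j\equiv1$ makes $T_j:=\hat\Psi_j(\lambda_j)$ invertible. Then $A=\hat\Psi_{j,\lambda}\hat\Psi_j^{-1}+\hat\Psi_j\,\alpha_j\sigma_3(\lambda-\lambda_j)^{-1}\hat\Psi_j^{-1}$ exhibits a simple pole with residue $A_j=T_j\alpha_j\sigma_3 T_j^{-1}$, whose eigenvalues are $\pm\alpha_j$. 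The analogous substitution $\hat\Psi_\infty:=\Psi\,\lambda^{\delta\sigma_3}=I+\mathcal{O}(\lambda^{-1})$ from condition~1) shows $A=\mathcal{O}(\lambda^{-1})$ at infinity with leading term $-\delta\sigma_3/\lambda$. Since $A$ is therefore meromorphic on $\mathbb{P}^1$ with only simple poles at $0,x,1$ and vanishes at $\infty$, Liouville's theorem yields $A(\lambda)=A_1/\lambda+A_2/(\lambda-x)+A_3/(\lambda-1)$; matching the $1/\lambda$ term of the expansion at infinity gives $\sum_j A_j=-\delta\sigma_3$, while $\Tr A_j=0$, so $A$ is exactly the Fuchsian coefficient \eqref{Lax_pair_p6} and $\Psi$ solves \eqref{Fuchsian_p6}.

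Finally, the identification of solutions and data is a matter of reading the definitions backwards. By construction $\Psi|_{D_j}$ and $\Psi|_{D_\infty}$ have precisely the local forms \eqref{Frobenius_sol_def}, so they are the canonical Frobenius solutions realizing \eqref{Psidef}; the connection matrices defined by $\Psi_j=\Psi_\infty E_j$ are the jumps prescribed on the radial cuts, and $M_j=E_j{\rm e}^{2\pi {\rm i}\alpha_j\sigma_3}E_j^{-1}$ reproduces the jumps on the circles $\mathcal{C}_j$, so the monodromy data of the reconstructed system coincide with the given RH data. The main obstacle I anticipate is the local analysis at $\lambda_j$ and at the nodal points: one must check that the boundedness conditions~2)--3), together with the cut assignments and the cyclic consistency \eqref{cyclic} of the data, genuinely force $A$ to have only simple poles with the correct residue spectra and no spurious singularities — everything else then follows from Liouville.
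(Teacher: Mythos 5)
Your argument is correct and is exactly the standard logarithmic-derivative reconstruction that the paper invokes for this proposition (it only cites \cite{FIKN} here, but spells out the identical steps --- $\det\Psi\equiv1$ by Liouville, $A=\Psi_\lambda\Psi^{-1}$ jump-free and rational with simple poles, residues read off from the local normalizations --- when treating the analogous Heun RH problem in Section~3.3). No substantive differences to report.
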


The proofs of these propositions are standard, see, e.g., \cite{FIKN}.

Assuming that the RH Problem~\ref{RHP_p6} is solvable, the Painlev\'e function can be extracted from the asymptotics of its solution at infinity. Indeed, introducing the matrices ${{\mathcal E}_k}= \{\delta_{ik}\delta_{jk}\}_{i,j=1,2}$, $k=1,2$, by straightforward computations we find the asymptotics of $\Psi_{\infty}(\lambda)$,
\begin{gather}\label{Psi_p6_as_at_8}
\Psi_{\infty}(\lambda)= \left(I +\frac{1}{\lambda}\psi_1 +\frac{1}{\lambda^2}\psi_2 +{\mathcal O}\left(\frac{1}{\lambda^3}\right) \right)
{\rm e}^{\frac{1}{\lambda}d_1\sigma_3 +\frac{1}{\lambda^2}(d_{21}{\mathcal E}_1+d_{22}{\mathcal E}_2)}\lambda^{-\delta\sigma_3}, \qquad \lambda\to\infty,
\end{gather}
where the coefficient matrices $\psi_1$ and $\psi_2$ are off-diagonal. The expressions of $\psi_k$, $d_{kl}$ in terms of the coefficients of $A(\lambda)$ (see (\ref{parameterization_p6})) can be found in Appendix~\ref{psi_k_p6}.

Using (\ref{deltaneq1/2_1_Psi_p6_as_at_8}), namely, the $\sigma_+$-components $(\psi_1)_+$ and $(\psi_2)_+$ of $\psi_1$ and $\psi_2$ respectively, and the scalars $d_1$, $d_{21}$, $d_{22}$, we find
\begin{gather}
\kappa=(2\delta-1)(\psi_1)_+,\qquad
p=-\delta(x+1)+2\delta\frac{(\delta-1)(\psi_2)_+}{\big(\delta-\frac{1}{2}\big)(\psi_1)_+}+d_1\frac{\delta+\frac{1}{2}}{\delta-\frac{1}{2}},\nonumber\\
y=x+1-\frac{(\delta-1)(\psi_2)_+}{\big(\delta-\frac{1}{2}\big)(\psi_1)_+}-\frac{d_1}{\delta-\frac{1}{2}},\nonumber\\
z=-d_{21}+d_{22}-2\delta(d_{21}+d_{22})-\delta x\nonumber\\
\hphantom{z=}{} -\left(\frac{\delta(\delta-1)(\psi_2)_+}{\big(\delta-\frac{1}{2}\big)(\psi_1)_+}+\frac{d_1}{2\big(\delta-\frac{1}{2}\big)}-\delta(x+1)\right)
\left(\frac{(\delta-1)(\psi_2)_+}{\big(\delta-\frac{1}{2}\big)(\psi_1)_+}+\frac{d_1}{\delta-\frac{1}{2}}\right).\label{p6_via_RH_sol}
\end{gather}

\subsection{Riemann--Hilbert problem for the Heun function}

Main result of this section states that the RH problem for the Heun function coincides with that for \PVI\ supplemented by the additional condition of triangularity of the sub-leading term of the asymptotic expansion of $\Psi(\lambda)$ as $\lambda\to\infty$.

Again, we consider the non-resonant case $2\alpha_1$, $2\alpha_2$, $2\alpha_3$, $2\delta\notin{\mathbb Z}$.

\subsubsection[Limiting equation (\ref{Lax_pair_p6}) and the $\Psi$-function at the pole $x=a$ of $y(x)$ as $\delta\neq\frac{1}{2}$ and $\sigma=+1$]{Limiting equation (\ref{Lax_pair_p6}) and the $\boldsymbol{\Psi}$-function\\ at the pole $\boldsymbol{x=a}$ of $\boldsymbol{y(x)}$ as $\boldsymbol{\delta\neq\frac{1}{2}}$ and $\boldsymbol{\sigma=+1}$}

Consider the Laurent expansion (\ref{simple_pole_p6}) with $\sigma=+1$ and the corresponding coefficient mat\-rix~(\ref{A_at_simple_pole+_p6}). For $x$ in a punctured neighborhood of the pole $x=a$, there exists an isomonodromy family of $\Psi$-functions. Furthermore, the continuity of the coefficient matrix with respect to~$x$ implies the continuity of the Frobenius solutions~(\ref{Frobenius_sol_def}) at $x=a$ as well.

On the other hand, the form of the coefficient matrix (\ref{A_at_simple_pole+_p6}) with $a_3=-\delta$ implies the following asymptotics of the solution to the linear ODE $\Psi_{\lambda}=A\Psi$,
\begin{gather}\label{Psi_p6_at_pole_sigma+_8}
\Psi(\lambda)=\left(I+\frac{1}{\lambda}\psi_1+\frac{1}{\lambda^2}\psi_2+{\mathcal O}\left(\frac{1}{\lambda^3}\right)\right) {\rm e}^{\frac{1}{\lambda}d_1\sigma_3+\frac{1}{\lambda^2}d_2\sigma_3}\lambda^{-\delta\sigma_3},\qquad
\lambda\to\infty,
\end{gather}
where the main difference from the asymptotic parameters in~(\ref{Psi_p6_as_at_8}), (\ref{deltaneq1/2_1_Psi_p6_as_at_8}) is the lower-triangular structure of the coefficient~$\psi_1$,
\begin{gather*}
\psi_1=-\frac{b_-}{2\delta+1}\sigma_-,\qquad d_1=-b_3+\delta(a+1),\nonumber\\
\psi_2=\frac{c_+}{2(\delta-1)}\sigma_+-\frac{b_-(a+1+2b_3)+c_-(2\delta+1)}{4(\delta+1)\big(\delta+\frac{1}{2}\big)}\sigma_-,\nonumber\\
d_2=\frac{1}{2}\bigl(-b_3(a+1)-c_3+\delta\big(1+a+a^2\big)\bigr).
\end{gather*}
We point out that the lower-triangular structure of $\psi_1$ implies the lower-triangular structure of the ${\mathcal O}\big(\lambda^{-1}\big)$-term in the expansion~\eqref{Psi_p6_at_pole_sigma+_8}.

All other principal analytic properties of the limiting function $\Psi(\lambda)$ including the leading order asymptotics at the singular points and the monodromy properties coincide with those of the function~$\Psi(\lambda)$ at the regular points of the Painlev\'e transcendent located in a sufficiently small neighborhood of the pole $x=a$.

Thus we have shown that the RH problem for the function $\Psi(\lambda)$ at the pole of the sixth Painlev\'e transcendent $y(x)$ as $\delta\neq\frac{1}{2}$ and $\sigma=+1$, and therefore the RH problem for a solution of the general Heun equation~(\ref{Heun_gen_can}), coincides with the RH Problem~\ref{RHP_p6} supplemented by the condition of the lower triangularity of the coefficient $\psi_1$ at infinity:

\begin{RHP}\label{RHP_at_pole_sigma+}Given $x$, $\delta$, $\alpha_j$, $2\delta,2\alpha_j\notin{\mathbb Z}$, $j=1,2,3$, the oriented graph~$\gamma$ shown in Fig.~{\rm \ref{fig1}} $($with $\lambda_1 =0$, $\lambda_2 =a$, and $\lambda_3 =1)$ and the jump matrices $M_j$, $E_j$, $j=1,2,3$, all assigned to the branches of $\gamma$ and satisfying the conditions \eqref{monodromy_matrices}, \eqref{Minfty}, \eqref{cyclic}, find a piecewise holomorphic $2\times2$ matrix function $\Psi(\lambda)$ with the following properties:
\begin{enumerate}\itemsep=0pt
\item[$1)$] $\|\Psi(\lambda)\lambda^{\delta\sigma_3}-I\|\leq C|\lambda|^{-1}$, $C={\rm const}$, as $\lambda\to\infty$,
\item[$2)$] $\lim\limits_{\lambda\to\infty} \lambda(\Psi(\lambda)\lambda^{\delta\sigma_3}-I)_{12}=0$,
\item[$3)$] $\|\Psi(\lambda)(\lambda-\lambda_j)^{-\alpha_j\sigma_3}\|\leq C_j$, $C_j={\rm const}$, as $\lambda\to\lambda_j$, where $\lambda_1=0$, $\lambda_2=a$, $\lambda_3=1$,
\item[$4)$] $\|\Psi(\lambda)\|\leq C_0$, $C_0={\rm const}$, as $\lambda$ approaches any nodal point of the graph~$\gamma$,
\item[$5)$] across the oriented contour $\gamma$, the discontinuity condition holds,
\begin{gather*}
\Psi_+(\lambda)=\Psi_-(\lambda)G(\lambda),\qquad \lambda\in\gamma,
\end{gather*}
where $\Psi_+(\lambda)$ and $\Psi_-(\lambda)$ are the left and right continuous limits of $\Psi(\lambda)$ as $\lambda$ approaches the contour~$\gamma$, and $G(\lambda)$ is the piecewise constant matrix defined on~$\gamma$, see Fig.~{\rm \ref{fig1}}.
\end{enumerate}
\end{RHP}

Let us show that, conversely, this RH problem leads to the structure (\ref{A_at_simple_pole+_p6}) of the coefficient matrix~$A(\lambda)$. Let $\Psi(\lambda)$ be a unique solution of the RH Problem~\ref{RHP_at_pole_sigma+}. First, $\det\Psi(\lambda)\equiv1$ since this determinant is piecewise holomorphic, continuous across the graph~$\gamma$, bounded at $\lambda_j$, $j=1,2,3$, and at the nodes of the graph~$\gamma$ and approaches the unit as $\lambda\to\infty$. Consider now the function $A(\lambda)=\Psi_{\lambda}\Psi^{-1}$. It is piecewise holomorphic, continuous across $\gamma$, has simple poles at $\lambda=\lambda_j$, $j=1,2,3$, and $\lambda=\infty$, is bounded at the nodes of $\gamma$ and therefore it is rational.

The conditions (1) and (2) of the RH Problem~\ref{RHP_at_pole_sigma+} imply the asymptotics
\begin{gather*}
\Psi(\lambda)=\bigl(I+\psi_1\lambda^{-1}+{\mathcal O}\big(\lambda^{-2}\big)\bigr)\lambda^{-\delta\sigma_3},\qquad \lambda\to\infty,
\end{gather*}
where $(\psi_1)_{12}=0$. Therefore
\begin{gather*}
A(\lambda)=-\frac{\delta}{\lambda}\sigma_3 -\frac{1}{\lambda^2} \bigl(\psi_1+[\psi_1,\sigma_3] \bigr) +{\mathcal O}\big(\lambda^{-3}\big),\qquad \lambda\to\infty,
\end{gather*}
i.e.,
\begin{gather*}
(A(\lambda))_{12}={\mathcal O}\big(\lambda^{-3}\big).
\end{gather*}
All these properties of $A(\lambda)$ imply its structure given in (\ref{A_at_simple_pole+_p6}). Thus, using (\ref{u_+_def}), the first row of $\Psi(\lambda)$ determines a fundamental system of solutions to GHE~(\ref{Heun_gen_can}).

We have proved the following

\begin{prop}Solution of the RH Problem~{\rm \ref{RHP_at_pole_sigma+}}, if it exists, determines a fundamental system of solutions to GHE~\eqref{Heun_gen_can} with the prescribed monodromy properties.
\end{prop}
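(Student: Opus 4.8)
The plan is to take the solution $\Psi(\lambda)$ of RH Problem~\ref{RHP_at_pole_sigma+} and reconstruct from it the Fuchsian ODE whose first-row solutions are the Heun functions, following the line already sketched before the statement. First I would record that $\det\Psi(\lambda)\equiv 1$: the scalar $\det\Psi$ is piecewise holomorphic, its jumps across $\gamma$ are trivial because the jump matrices $M_j$, $E_j$ have unit determinant, it stays bounded at each $\lambda_j$ and at every node of $\gamma$ by conditions (3)--(4), and it tends to $1$ at infinity by condition (1); Liouville's theorem then forces $\det\Psi\equiv 1$, so the two rows of $\Psi$ are pointwise linearly independent.

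Next I would introduce the logarithmic derivative $A(\lambda)=\Psi_\lambda(\lambda)\Psi(\lambda)^{-1}$ and argue that it is rational. Because the jump matrices are \emph{constant}, differentiating $\Psi_+=\Psi_-G$ gives $\Psi_{+,\lambda}=\Psi_{-,\lambda}G$, hence $A_+=A_-$ and $A$ extends holomorphically across $\gamma$. Condition (3) shows $A$ has at worst a simple pole at each $\lambda_j$ with residue eigenvalues $\pm\alpha_j$, condition (1) gives a simple pole at $\infty$ with leading term $-\delta\sigma_3$, and condition (4) keeps $A$ bounded at the nodes; therefore $A$ is a rational matrix function with exactly the four simple poles at $0$, $a$, $1$, $\infty$.

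The crucial input is condition (2). Expanding $\Psi(\lambda)=\bigl(I+\psi_1\lambda^{-1}+\mathcal{O}(\lambda^{-2})\bigr)\lambda^{-\delta\sigma_3}$ at infinity, condition (2) says precisely $(\psi_1)_{12}=0$, i.e.\ $\psi_1$ is lower triangular. Feeding this into $A=\Psi_\lambda\Psi^{-1}$ yields $(A(\lambda))_{12}=\mathcal{O}(\lambda^{-3})$ as $\lambda\to\infty$, which, combined with the simple-pole structure and the common denominator $\lambda(\lambda-1)(\lambda-a)$, forces the $(1,2)$-entry of $A$ to carry a \emph{constant} numerator, exactly the $\sigma_+$-term of~\eqref{A_at_simple_pole+_p6}. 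This is the step I expect to be the real content: everything else is Liouville-type bookkeeping, whereas here one must verify that the single scalar condition (2) removes the $\lambda$ in the numerator of $A_{12}$, which is precisely the degeneracy that prevents an apparent fifth singularity from being created when the second row of $\Psi$ is eliminated in passing to the scalar equation.

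Finally I would reduce to the scalar equation. With $A$ of the form~\eqref{A_at_simple_pole+_p6}, the first-row entries $\Psi_{1*}(\lambda)$ satisfy a second-order Fuchsian ODE with singularities only at $0$, $a$, $1$, $\infty$ and no apparent singularity, and the gauge $u=\lambda^{\alpha_1}(\lambda-a)^{\alpha_2}(\lambda-1)^{\alpha_3}\Psi_{1*}$ of~\eqref{u_+_def} brings it into the canonical form~\eqref{Heun_gen_can}, as established earlier. Since $\det\Psi\equiv1$, the two entries of $\Psi_{1*}$ are linearly independent, so $u$ furnishes a fundamental system. The prescribed monodromy is then inherited: condition (3) fixes the Frobenius exponents $\pm\alpha_j$, the jumps $M_j$ across the circles $\mathcal{C}_j$ realize the branching~\eqref{global_monodromies}, and the connection matrices $E_j$ fix the global transitions; twisting by the scalar prefactor in~\eqref{u_+_def} converts these matrix data into the monodromy of the Heun solutions, which therefore coincides with the given data.
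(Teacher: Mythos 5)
Your argument is correct and follows essentially the same route as the paper's own proof: establish $\det\Psi\equiv 1$ by Liouville, show $A=\Psi_\lambda\Psi^{-1}$ is rational with simple poles at $0,a,1,\infty$, use condition (2) to get $(\psi_1)_{12}=0$ and hence $(A)_{12}={\mathcal O}\big(\lambda^{-3}\big)$, which forces the structure~(\ref{A_at_simple_pole+_p6}), and then pass to the scalar Heun equation via~(\ref{u_+_def}). You correctly single out the role of condition (2) in eliminating the linear term from the numerator of $A_{12}$, which is exactly the point the paper's proof turns on.
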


The accessory parameter $\nu$ in (\ref{Heun_gen_can}) is also determined via the RH Problem~\ref{RHP_at_pole_sigma+}. Indeed, $\nu$ can be extracted from the asymptotics of $\Psi(\lambda)$ at infinity. Namely, the parameter~$d_1$, i.e., the diagonal part of the term~${\mathcal O}\big(\lambda^{-1}\big)$ of the asymptotic expansion of~$\Psi\lambda^{\delta\sigma_3}$, determines the coefficient $b_3$ in the coefficient matrix~(\ref{A_at_simple_pole+_p6}) and hence the free coefficient~$c_0$ in the Laurent expansion~(\ref{simple_pole_p6}) and the accessory parameter~$\nu$,
\begin{gather}
b_3=-d_1+\delta(a+1),\qquad c_0=\frac{b_3+a-1}{2\delta-1}=\frac{-d_1+a(\delta+1)+\delta-1}{2\delta-1},\nonumber\\
\nu=-d_1(2\delta-1)+\alpha_1+\alpha_2-(\alpha_1+\alpha_2)^2+\alpha_3^2-\delta^2\nonumber\\
\hphantom{\nu=}{}+a\big( \alpha_1+\alpha_3-(\alpha_1+\alpha_3)^2+\alpha_2^2-\delta^2\big)+\delta(2\delta-1)(a+1).\label{b3_c0_nu_via_d1_sigma+}
\end{gather}

\begin{rem}The condition (2) of the RH Problem~\ref{RHP_at_pole_sigma+} can be in fact thought of as an addition to cyclic relation~(\ref{cyclic}) restriction on the monodromy matrices $\{M_j\}$ which would also involve the point $a$. This restriction can be formulated in terms of the sixth Painlev\'e transcendent in two different but equivalent ways. Firstly, introducing on the monodromy surface~$\mathcal{M}$ coordinates $t$ and $s$ (see~(\ref{tsdef})), let $y(x) \equiv y(x;t,s) $ be the corresponding sixth Painlev\'e function (cf.~(\ref{yts})). Then the point $a$ must be one of the $(\sigma = +1)$ poles of $y(x;t,s) $. Alternatively, assuming the position $a$ of the pole of $y(x)$ to be a free parameter, one can parameterize $y(x)$ by the pair{\footnote{This parameterization of $y(x)$ is more subtle than by the monodromy pair $(t,s)$. Indeed, \PVI\ function might have infinitely many poles and therefore one might have an infinite discrete set of the pairs $(t, a_n)$ corresponding to the same \PVI\ function $y(x) \equiv y(x;t,s) $. Hence, one has to be careful describing the global properties of the parameterization, $y(x) \equiv y(x;t,a)$. Also, the function $s(t,a)$ implicitly defined by equation~(\ref{sta}) below might have infinitely many branches. This means that for each pair $(t,a)$ there might be an infinite discrete set of the values of the second monodromy coordinate, $s$, and hence an infinite discrete set of the \PVI\ functions $y(x)$ with the same values of $t$ and $a$. The discussion of these important issues is, however, beyond the scope of this work.}} $(t, a)$, $y(x) \equiv y(x;t,a)$. Then, the second monodromy data~$s$ becomes the function of $(t,a)$ which can be described implicitly as follows. Note that together with~$s$, the coefficient~$c_0$ in the Laurent expansion~(\ref{simple_pole_p6}) and the accessory parameter $\nu$ also become the functions of~$t$ and~$a$,
\begin{gather*}
s \equiv s(t,a),\qquad c_0 \equiv c_0(t,a), \qquad \nu\equiv \nu(t,a).
\end{gather*}
At the same time, the RH Problem~\ref{RHP_p6} determines the solution $\Psi(\lambda)$ and hence all the objects related to it, specifically the coefficients of its expansion~(\ref{Psi_p6_as_at_8}) at $\lambda = \infty$, as the functions of the point on the monodromy surface, that is as the functions of~$t$ and~$s$. In particular, we have that $d_1 = d_1(x; t, s)$. Using now the second equation in~(\ref{b3_c0_nu_via_d1_sigma+}), the function~$s(t,a)$ can be defined implicitly via the equation
\begin{gather}\label{sta}
c_0(t,a)=\frac{-d_1(a; t, s)+a(\delta+1)+\delta-1}{2\delta-1}.
\end{gather}
\end{rem}
\begin{rem} Excluding the parameter $d_1$ from the second and third equations in (\ref{b3_c0_nu_via_d1_sigma+}), we obtain the formula relating the accessory parameter~$\nu(t, a)$ and the free coefficient~$c_0(t,a)$ in the Laurent expansion~(\ref{simple_pole_p6}),
\begin{gather}
\nu(t,a) = (1-2\delta)^2c_0(t,a) + 3\delta^2(1+a)\nonumber\\
 \hphantom{\nu(t,a) =}{} + a\big(\alpha_1 + \alpha_3 -(\alpha_1 + \alpha_3)^2 + \alpha_2^2\big) + \alpha_1 + \alpha_2 -(\alpha_1 + \alpha_2)^2 + \alpha_3^2.\label{vc0}
\end{gather}
As it has already been mentioned before, this relation has been already found in \cite{LLNZ} using a~heuristic technique based on the remarkable connection (also discovered in~\cite{LLNZ}) between the classical conformal blocks and the sixth Painlev\'e equation.
\end{rem}

\begin{rem} In this section we considered the regular case, i.e., $\delta \neq 0, \frac{1}{2}$ and $\sigma= +1$ only. Our arguments, however, can be easily extended to the generic singular case, i.e., $\delta \neq 0, \frac{1}{2}, 1$ and $\sigma= -1$, and to the special singular cases, i.e., $\delta = 1$ and $\sigma= -1$ and $\delta = \frac{1}{2}$. One only needs, before producing the relevant analogs of the RH problem \ref{RHP_at_pole_sigma+}, to make the preliminary Schlesinger transformations of the RH Problem~\ref{RHP_p6} with the gauge matrices~$R(\lambda)$ discussed in details in Sections~\ref{2.3.2}, \ref{2.3.3} and~\ref{2.3.4}.
\end{rem}
\begin{rem} In this paper we are dealing with the poles of $y(x)$. Similar results concerning the reduction of the RH Problem~\ref{RHP_p6} to a Riemann--Hilbert problem for Heun equation can be obtained for two other critical values of $y(x)$, i.e., when $a$ is either a zero of $y(x)$ or $y(a) =1$.
\end{rem}

We believe that the Riemann--Hilbert technique we are developing here can be used to study effectively the Heun functions. In particular, the relation (\ref{vc0}) allows one to get nontrivial information about the accessory parameter $\nu(t,a)$. In particular, using the known connection formulae for the pole distributions of the \PVI\ equation~\cite{Guz} (obtained with the help of the isomonodromy RH Problem~\ref{RHP_p6}; see also~\cite{Guz2} for complete list of the asymptotic connection formulae and the history of the question), one can obtain the asymptotic expansion of the Laurent coefficient $c_0(t,a)$ for the either large values of~$a$ or for small values of~$a$ or for the values of~$a$ close to~1. This in turn would yield the explicit formulae for the asymptotic behavior of the accessory parameter $\nu(t, a)$ as $ a\sim \infty$, $a\sim 0$ and $a \sim 1$. This question should be addressed in details in the future work on this subject. In the rest of this paper, we will demonstrate the usefulness of the RH Problem~\ref{RHP_at_pole_sigma+} in the study of another issue related to the Heun equations which is the construction of its explicit solutions.

\subsection[Example: reducible monodromy, generalized Jacobi and Heun polynomials]{Example: reducible monodromy, generalized Jacobi\\ and Heun polynomials}

Consider the case of reducible monodromy when all the monodromy matrices are upper triangular. Reducible monodromy for \PVI\ system was studied, e.g., in~\cite{Mazz} where it was shown that this class of Painlev\'e functions contains classical and all rational solutions to~\PVI.

In \cite{FIKN}, the relevant function $\Psi(\lambda)$ was constructed explicitly for $0\leq\Re\alpha_j<\frac{1}{2}$, $j=1,2,3$, and $\alpha_1+\alpha_2+\alpha_3+\delta=0$. Below, we use the approach of~\cite{FIKN} assuming that
\begin{gather}\label{alpha_delta_assumption}
\Re\alpha_j\in\big[0,\tfrac{1}{2}\big),\qquad \alpha_1+\alpha_2+\alpha_3+\delta=-n,\qquad n\in{\mathbb N}.
\end{gather}

Let all the monodromy matrices $M_j$, $j=1,2,3$, be upper triangular and thus each of them depends on one free parameter~$s_j$,
\begin{gather}\label{reducible_monodromy}
M_j=\begin{pmatrix} {\rm e}^{-2\pi {\rm i}\alpha_j}&s_j\\ 0&{\rm e}^{2\pi {\rm i}\alpha_j} \end{pmatrix}, \qquad s_j\neq0,\qquad j=1,2,3.
\end{gather}
The cyclic relation $M_1M_2M_3=M_{\infty}$ implies that
\begin{gather*}
\alpha_1+\alpha_2+\alpha_3+\delta=-n \in{\mathbb Z},\qquad s_1{\rm e}^{2\pi {\rm i}\alpha_2} +s_2{\rm e}^{-2\pi {\rm i}\alpha_1} +s_3{\rm e}^{2\pi {\rm i}\delta}=0.
\end{gather*}
Thus the set of reducible monodromy data form a 2-dimensional linear space.

Following \cite{FIKN}, we first simplify the RH jump graph replacing $\gamma$ by the broken line $[\lambda_1,\lambda_2]\cup[\lambda_2,\lambda_3] \cup[\lambda_3,\infty)$, see Fig.~\ref{fig1_simple}.

\begin{figure}[t]\centering
\begin{tikzpicture}

\node at (0,0) {\includegraphics[scale=0.6]{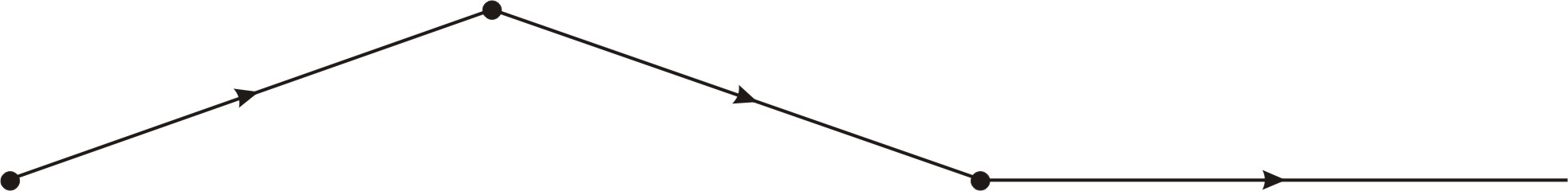}};

\node at (-5,-0.2) {$\lambda_1$}; \node at (-3.4,0.35) {$M_1$};

\node at (-1.7,0.8) {$\lambda_2$}; \node at (0.3,0.2) {$M_1M_2$};

\node at (1.5,-0.25) {$\lambda_3$}; \node at (3,-0.25) {$M_\infty$}; \node at (5,-0.25) {$\infty$};

\end{tikzpicture}
\caption{Simplified jump contour for RH Problem~\ref{RHP_p6}.} \label{fig1_simple}
\end{figure}

On the plane, make a cut along the broken line $[\lambda_1,\lambda_2]\cup[\lambda_2,\lambda_3]\cup[\lambda_3,\infty)$, define the function
\begin{gather*}
f(\lambda)= (\lambda-\lambda_1)^{\alpha_1} (\lambda-\lambda_2)^{\alpha_2} (\lambda-\lambda_3)^{\alpha_3}.
\end{gather*}
Although until we reach Proposition \ref{3.7} it is not really important, we remind that
\begin{gather*}
\lambda_1 = 0, \quad \lambda_2 = x, \qquad\mbox{and}\qquad \lambda_3 =1.
\end{gather*}
Observe the following properties of $f(\lambda)$:
\begin{gather*}
f(\lambda)=\lambda^{-\delta-n} \big(1+{\mathcal O}\big(\lambda^{-1}\big)\big),\qquad \lambda\to\infty,\nonumber\\
f_+(\lambda)=f_-(\lambda){\rm e}^{-2\pi {\rm i}\alpha_1},\qquad \lambda\in(\lambda_1,\lambda_2),\nonumber\\
f_+(\lambda)=f_-(\lambda){\rm e}^{-2\pi {\rm i}(\alpha_1+\alpha_2)},\qquad \lambda\in(\lambda_2,\lambda_3),\nonumber\\
f_+(\lambda)=f_-(\lambda){\rm e}^{2\pi {\rm i}\delta},\qquad \lambda\in(\lambda_3,\infty).
\end{gather*}

Let us represent the solution $\Psi(\lambda)$ as the product
\begin{gather*}
\Psi(\lambda)=\Phi(\lambda)f^{\sigma_3}(\lambda).
\end{gather*}
The function $\Phi(\lambda)$ thus has the following properties:

\begin{RHP}\label{RHP_Jacobi}\quad
\begin{enumerate}\itemsep=0pt
\item[$1)$] $\Phi(\lambda)=\big(I+{\mathcal O}\big(\lambda^{-1}\big)\big) \lambda^{n\sigma_3}$ as $\lambda\to\infty$,
\item[$2)$] $\|\Phi(\lambda)f^{\sigma_3}(\lambda)E_j f^{-\sigma_3}(\lambda)\|\leq C$ as $\lambda\to\lambda_j$, $j=1,2,3$,
\item[$3)$] $\Phi_+(\lambda)=\Phi_-(\lambda)G_f(\lambda)$, $\lambda\in(\lambda_1,\lambda_2)\cup (\lambda_2,\lambda_3)$, moreover
\begin{gather}
G_f(\lambda)=I+g(\lambda)\sigma_+,\qquad g(\lambda)= \begin{cases}
s_1f_+(\lambda)f_-(\lambda),& \lambda\in(\lambda_1,\lambda_2),\\
-s_3{\rm e}^{2\pi {\rm i}\delta} f_+(\lambda)f_-(\lambda),& \lambda\in(\lambda_2,\lambda_3).
\end{cases}\label{Gf_reducible_def}
\end{gather}
\end{enumerate}
\end{RHP}

\begin{prop}Solution to the RH Problem~{\rm \ref{RHP_Jacobi}} is given by polynomials orthogonal with respect to the weight function $g(\lambda)$ on $(\lambda_1,\lambda_2)\cup(\lambda_2,\lambda_3)$ and by their Cauchy integrals.
\end{prop}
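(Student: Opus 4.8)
The plan is to recognise RH Problem~\ref{RHP_Jacobi} as the Fokas--Its--Kitaev characterisation of orthogonal polynomials and to reconstruct its solution column by column. First I would use the unipotent upper-triangular shape of the jump $G_f=I+g\sigma_+$ in condition~(3): comparing the $(1,1)$ and $(2,1)$ entries of $\Phi_+=\Phi_-G_f$ shows that the first column of $\Phi$ suffers no jump across $(\lambda_1,\lambda_2)\cup(\lambda_2,\lambda_3)$ and is therefore holomorphic on $\mathbb{C}\setminus\{\lambda_1,\lambda_2,\lambda_3\}$. The endpoint estimate~(2) bounds these entries by ${\mathcal O}\big((\lambda-\lambda_j)^{-2\alpha_j}\big)$; since $2\Re\alpha_j<1$ by~\eqref{alpha_delta_assumption} this growth is weaker than a simple pole, so the isolated singularities are removable and the first column is entire. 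The normalisation~(1) then forces $\Phi_{11}$ to be a monic polynomial of degree $n$, which I denote $P_n$, and $\Phi_{21}$ a polynomial of degree at most $n-1$.

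Next I would recover the second column. Using the continuity of the first column just established, the jump relation reduces to the scalar additive jumps $(\Phi_+)_{12}-(\Phi_-)_{12}=P_n\,g$ and $(\Phi_+)_{22}-(\Phi_-)_{22}=\Phi_{21}\,g$, while condition~(1) requires $\Phi_{12}$ and $\Phi_{22}$ to vanish at infinity. Since $g=\pm s_jf_+f_-\sim(\lambda-\lambda_j)^{2\alpha_j}$ is integrable at the endpoints (here only $\Re\alpha_j\ge0$ is used), the Sokhotski--Plemelj formula gives the unique decaying solutions as Cauchy transforms,
\[
\Phi_{12}(\lambda)=\frac{1}{2\pi {\rm i}}\int\frac{P_n(s)\,g(s)}{s-\lambda}\,\dd s,\qquad
\Phi_{22}(\lambda)=\frac{1}{2\pi {\rm i}}\int\frac{\Phi_{21}(s)\,g(s)}{s-\lambda}\,\dd s,
\]
the integrals taken over $(\lambda_1,\lambda_2)\cup(\lambda_2,\lambda_3)$ with $g$ as in~\eqref{Gf_reducible_def}.

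The orthogonality relations then fall out of the large-$\lambda$ expansion $\frac{1}{s-\lambda}=-\sum_{k\ge0}s^k\lambda^{-k-1}$. The requirement $\Phi_{12}={\mathcal O}\big(\lambda^{-n-1}\big)$ from~(1) is equivalent to $\int P_n(s)s^k g(s)\,\dd s=0$ for $k=0,\dots,n-1$, i.e.\ $P_n$ is exactly the monic degree-$n$ polynomial orthogonal with respect to the weight~$g$; and $\Phi_{22}=\lambda^{-n}\big(1+{\mathcal O}\big(\lambda^{-1}\big)\big)$ is equivalent to $\int \Phi_{21}(s)s^k g(s)\,\dd s=0$ for $k\le n-2$ together with $\int\Phi_{21}(s)s^{n-1}g(s)\,\dd s=-2\pi {\rm i}$, which identifies $\Phi_{21}$ as the degree-$(n-1)$ orthogonal polynomial normalised by its squared norm $h_{n-1}$, namely $\Phi_{21}=-\frac{2\pi {\rm i}}{h_{n-1}}P_{n-1}$. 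Hence every entry of a solution is built from $P_n$, $P_{n-1}$ and their Cauchy integrals, as claimed.

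Finally I would settle existence and uniqueness. The displayed formulas define a genuine solution whenever $P_n$ and $P_{n-1}$ exist, i.e.\ whenever the relevant Hankel moment determinant of the (complex, sign-changing) weight $g$ is non-zero; uniqueness is the usual Liouville argument, since $\det G_f=1$ and the normalisation~(1) give $\det\Phi\equiv1$, so the quotient of two solutions is entire, bounded, and tends to $I$, hence equals $I$. I expect the main obstacle to be the endpoint bookkeeping of the previous paragraphs: one must check with care that condition~(2) indeed delivers the stated ${\mathcal O}\big((\lambda-\lambda_j)^{-2\alpha_j}\big)$ control on the first column at all three points, and that the moments and orthogonality remain well defined across the interior node $\lambda_2$, where $g$ switches branch and sign. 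Both points rest squarely on the hypothesis $\Re\alpha_j\in\big[0,\tfrac12\big)$, so this is where the analytic care is concentrated.
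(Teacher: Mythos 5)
Your argument is correct and follows essentially the same route as the paper's proof: the unipotent upper-triangular jump together with the endpoint bounds and the normalization at infinity force the first column of $\Phi$ to consist of polynomials of degrees $n$ and $n-1$, the second column is given by their Cauchy transforms over $(\lambda_1,\lambda_2)\cup(\lambda_2,\lambda_3)$, and the orthogonality relations (with the normalization constant $-2\pi{\rm i}/\int_\ell\pi_{n-1}^2g$) are read off from the large-$\lambda$ expansion. The only, immaterial, difference is one of direction: the paper writes down the Fokas--Its--Kitaev matrix $Y(\lambda)$ built from the orthogonal polynomials and verifies conditions (1)--(3) before invoking uniqueness, whereas you derive the second column from the additive scalar jumps via Sokhotski--Plemelj.
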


\begin{proof}First of all, by the conventional arguments, if a solution to this problem exists, it is unique.

Next, upper triangularity of all jump matrices, see condition (3), means that the first column of $\Phi(\lambda)$ is single-valued and continuous across the broken line $(\lambda_1,\lambda_2)\cup(\lambda_2,\lambda_3)$. Condition (2) means that the first column is bounded at $\lambda=\lambda_j$, $j=1,2,3$. Then condition~(1) yields that the entries of the first column are some polynomials of degree $n$ and $n-1$, respectively.

Denote the relevant monic polynomials of degree $n$ and $n-1$ by $\pi_n(\lambda)$ and $\pi_{n-1}(\lambda)$, respectively. Consider an auxiliary matrix function $Y(\lambda)$ (cf.~\cite{FIK})
\begin{gather*}
Y(\lambda)=\begin{pmatrix}
\pi_n(\lambda)& \displaystyle \frac{1}{2\pi {\rm i}}\int_{\ell}\pi_n(\zeta)g(\zeta)\frac{\dd\zeta}{\zeta-\lambda}\vspace{1mm}\\
c_{n-1}\pi_{n-1}(\lambda)& \displaystyle \frac{c_{n-1}}{2\pi {\rm i}} \int_{\ell}\pi_{n-1}(\zeta)g(\zeta)\frac{\dd\zeta}{\zeta-\lambda}
\end{pmatrix},
\end{gather*}
where $\ell=(\lambda_1,\lambda_2)\cup(\lambda_2,\lambda_3)$ and the constant $c_{n-1}$ is defined by
\begin{gather*}
c_{n-1}=-\frac{2\pi {\rm i}} {\int_{\ell}\pi_{n-1}^2(\zeta)g(\zeta)\dd \zeta}.
\end{gather*}

As it is easy to see, $Y(\lambda)$ satisfies the jump condition~(3).

At the points $\lambda=\lambda_j$, $j=1,2,3$, the function $Y(\lambda)$ has the algebraic branching, $(\lambda-\lambda_j)^{2\alpha_j}$, and therefore, taking into account assumption (\ref{alpha_delta_assumption}), condition (2) is satisfied.

Finally, consider the series expansion of the entries of the second column of $Y(\lambda)$ as $|\lambda|$ is large enough,
\begin{gather*}
\frac{1}{2\pi {\rm i}}\int_{\ell}\pi_m(\zeta)g(\zeta) \frac{\dd\zeta}{\zeta-\lambda}=-\frac{1}{2\pi {\rm i}}\sum_{k=0}^{\infty}\lambda^{-k-1}\int_{\ell} \pi_m(\zeta)\zeta^kg(\zeta)\dd \zeta,
\end{gather*}
$m=n$ or $m=n-1$. Then condition (1) means that the polynomials $p_n(\lambda)$ and $p_{n-1}(\lambda)$ are both orthogonal to all lower degree monomials $\lambda^k$ for $k=0,1,\dots,n-1$ and $k=0,1,\dots,n-2$, respectively,
\begin{gather*}
\int_{\ell} \pi_n(\zeta)\zeta^k g(\zeta) \dd \zeta=0,\qquad k=0,1,\dots,n-1,\\
\int_{\ell}\pi_{n-1}(\zeta)\zeta^k g(\zeta) \dd\zeta=0,\qquad k=0,1,\dots,n-2.
\end{gather*}
The choice of $c_{n-1}$ made above implies the normalization of $Y(\lambda)$ at infinity
\begin{gather*}
Y(\lambda)=\big(I+{\mathcal O}\big(\lambda^{-1}\big)\big) \lambda^{n\sigma_3}
\end{gather*}
that complies with the condition (1). Thus the explicitly constructed function $Y(\lambda)$ solves the RH Problem~\ref{RHP_Jacobi}. Since its solution is unique, proof of proposition is completed.
\end{proof}

The weight function $g(\lambda)$ in (\ref{Gf_reducible_def}) can be understood as a generalization of the hypergeometric weight, thus we call the polynomials $p_n(\lambda)$ and $p_{n-1}(\lambda)$ determined by the RH Problem~\ref{RHP_Jacobi} the {\em generalized Jacobi} polynomials.

Now, we are going to construct the generalized Jacobi polynomials explicitly and relate them to the polynomial solutions of the Heun equation, i.e., to the {\it Heun polynomials.}

To this end, we look for solution to the RH Problem~\ref{RHP_Jacobi} in the form
\begin{gather}\label{Phi_reducible_ansatz}
\Phi(\lambda)=R(\lambda)
\begin{pmatrix}
1&\phi(\lambda)\\
0&1
\end{pmatrix},
\end{gather}
where $R(\lambda)$ is a matrix-valued polynomial. Using the jump properties of $\Phi(\lambda)$, we find the jump properties of the scalar function~$\phi(\lambda)$,
\begin{gather*}
\phi_+(\lambda)-\phi_-(\lambda)=g(\lambda),\qquad \lambda\in (\lambda_1,\lambda_2)\cup(\lambda_2,\lambda_3).
\end{gather*}
One of the solutions to this scalar jump problem is given explicitly
\begin{gather}\label{phi_reducible_sol}
\phi(\lambda)=\frac{1}{2\pi {\rm i}}\int_{\lambda_1}^{\lambda_3}\frac{g(\zeta)}{\zeta-\lambda}\dd\zeta.
\end{gather}
Observe the behavior of $\phi(\lambda)$~(\ref{phi_reducible_sol}) at the singularities
\begin{gather*}
\phi(\lambda)= {\mathcal O}\big(\lambda^{-1}\big),\qquad \lambda\to\infty,\nonumber\\
\phi(\lambda)={\mathcal O}\big((\lambda-\lambda_j)^{2\alpha_j}\big) +{\mathcal O}(1),\qquad \lambda\to\lambda_j,\qquad j=1,2,3. 
\end{gather*}
Below, we use the coefficients $\phi_k$ of the expansion of $\phi(\lambda)$~(\ref{phi_reducible_sol}) near infinity
\begin{gather*}
\phi(\lambda)= \frac{1}{2\pi {\rm i}}\int_{\lambda_1}^{\lambda_3}\frac{g(\zeta)}{\zeta-\lambda}\dd\zeta= \sum_{k=1}^{\infty}\phi_k\lambda^{-k},
\end{gather*}
closely related to the moments of the weight function $g(\lambda)$,
\begin{gather}\label{phi_f_as}
\phi_k= -\frac{1}{2\pi {\rm i}}\int_{\lambda_1}^{\lambda_3}g(\zeta)\zeta^{k-1}\dd\zeta,\qquad k\in{\mathbb N}.
\end{gather}

The left factor $R(\lambda)$ is a polynomial matrix of the Schlesinger transformation at infinity~\cite{JM} and for $n\geq0$ it can be found explicitly. For instance,
\begin{gather}
\text{if}\ n=0\colon\quad R(\lambda)=R_0(\lambda)=I,\qquad \pi_0(\lambda)\equiv1,\qquad \pi_{-1}(\lambda)\equiv0,\nonumber\\
\text{if}\quad n=1\colon\quad R(\lambda)=R_1(\lambda)=\begin{pmatrix}
\lambda-\dfrac{\phi_2}{\phi_1}&-\phi_1 \vspace{1mm}\\
\dfrac{1}{\phi_1}&0 \end{pmatrix},\qquad \pi_1(\lambda)=\lambda-\frac{\phi_2}{\phi_1}. \label{R_n=0_n=1}
\end{gather}
In particular, relations (\ref{R_n=0_n=1}) imply that the RH Problem~\ref{RHP_Jacobi} for $n=0$ is always solvable, cf.~\cite{FIKN}, while for $n=1$, this problem is solvable if $\phi_1\neq0$.

To formulate the result for any fixed $n\geq2$, introduce the following explicit form of the polynomial matrix $R(\lambda)=R_n(\lambda)$,
\begin{gather}\label{R_explicit}
R_n(\lambda)=
\begin{pmatrix}
\displaystyle \sum_{k=0}^np_k^{(n)}\lambda^k& \displaystyle \sum_{k=0}^{n-1}q_k^{(n)}\lambda^k\vspace{1mm}\\
\displaystyle \sum_{k=0}^{n-1}r_k^{(n)}\lambda^k& \displaystyle \sum_{k=0}^{n-2}s_k^{(n)}\lambda^k
\end{pmatrix} ,\qquad p_n^{(n)}=1,
\end{gather}
and the column vectors of the coefficients of the polynomial entries of $R_n(\lambda)$,
\begin{gather*}
{\bf p}_n=
\begin{pmatrix}
p_0^{(n)}\\
\vdots\\
p_{n-1}^{(n)}
\end{pmatrix},\qquad
{\bf q}_n=
\begin{pmatrix}
q_0^{(n)}\\
\vdots\\
q_{n-1}^{(n)}
\end{pmatrix}
,\qquad
{\bf r}_n=
\begin{pmatrix}
r_0^{(n)}\\
\vdots\\
r_{n-1}^{(n)}
\end{pmatrix}
,\qquad
{\bf s}_n=
\begin{pmatrix}
s_0^{(n)}\\
\vdots\\
s_{n-2}^{(n)}
\end{pmatrix}.
\end{gather*}

We also need the Hankel matrix ${\mathcal H}_n=\{\phi_{i+j-1}\}_{i,j=1,n}$ of the moments~\eqref{phi_f_as} along with its determinant $\triangle_n$,
\begin{gather*}
{\mathcal H}_n= \begin{pmatrix}
\phi_1&\phi_2&\cdots&\phi_n\\
\phi_2&\phi_3&\cdots&\phi_{n+1}\\
\vdots&\vdots&\ddots&\vdots\\
\phi_n&\phi_{n+1}&\cdots&\phi_{2n-1}\\
\end{pmatrix},\qquad
\triangle_n=\det{\mathcal H}_n.
\end{gather*}
Finally define the sequence of coefficients $f_k$ for asymptotic expansion of $f(\lambda)=\prod\limits_{j=1,2,3}(\lambda-\lambda_j)^{\alpha_j}$ at infinity
\begin{gather*}
f(\lambda)= \lambda^{-\delta-n}{\rm e}^{\sum\limits_{k=1}^{\infty}f_k\lambda^{-k}},\qquad f_k=-\frac{1}{k} \sum_{j=1}^3\alpha_j\lambda_j^k,\qquad \lambda\to\infty.
\end{gather*}

Then we have the following
\begin{prop}\looseness=-1 RH Problem~{\rm \ref{RHP_Jacobi}} is solvable if and only if $\triangle_n\neq0$. Its solution has the form~\eqref{Phi_reducible_ansatz} where the coefficients of the polynomial entries of the matrix~$R(\lambda)$~\eqref{R_explicit} are given by
\begin{gather}
{\bf p}_n=-{\mathcal H}_n^{-1}\begin{pmatrix} \phi_{n+1}\\ \vdots\\ \phi_{2n} \end{pmatrix},\qquad p_n^{(n)}=1,\nonumber\\ q_k^{(n)}= -\sum_{m=k+1}^n \phi_{m-k}p_m^{(n)},\qquad k=0,\dots,n-1,\label{p_n_eq}\\
{\bf r}_n={\mathcal H}_n^{-1} \begin{pmatrix} 0\\ \vdots\\ 0\\ 1 \end{pmatrix},\qquad r_{n-1}^{(n)}= c_{n-1},\qquad s_k^{(n)}=-\sum_{m=k+1}^{n-1} \phi_{m-k}r_m^{(n)},\qquad k=0,\dots,n-2.\nonumber
\end{gather}
\end{prop}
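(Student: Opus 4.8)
The plan is to substitute the ansatz~\eqref{Phi_reducible_ansatz} into RH Problem~\ref{RHP_Jacobi} and to observe that the jump condition~(3) and the boundedness condition~(2) are already built into it, so that everything reduces to matching the normalisation~(1) at $\lambda=\infty$. First I would note that, because the scalar Cauchy integral $\phi(\lambda)$ in~\eqref{phi_reducible_sol} satisfies the additive jump $\phi_+-\phi_-=g$ on $\ell=(\lambda_1,\lambda_2)\cup(\lambda_2,\lambda_3)$ while $R(\lambda)$ is entire, the product $\Phi=R\left(\begin{smallmatrix}1&\phi\\0&1\end{smallmatrix}\right)$ automatically acquires the multiplicative jump $G_f=I+g\sigma_+$ of~\eqref{Gf_reducible_def}. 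Moreover the only singularities of $\Phi$ at the points $\lambda_j$ come from the $\mathcal O\bigl((\lambda-\lambda_j)^{2\alpha_j}\bigr)$ terms of $\phi$, which are bounded under the assumption~\eqref{alpha_delta_assumption} ($\Re\alpha_j\ge0$), so condition~(2) holds for every polynomial $R$, by the same reasoning used for $Y(\lambda)$ in the previous proposition. Hence the whole content of the statement will be the determination of the coefficients of~$R$ from condition~(1).

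Next I would carry out the asymptotic expansion. Writing $\phi(\lambda)=\sum_{k\ge1}\phi_k\lambda^{-k}$ and multiplying out, one obtains
\begin{gather*}
\Phi(\lambda)\lambda^{-n\sigma_3}=
\begin{pmatrix}
R_{11}\lambda^{-n}&(R_{11}\phi+R_{12})\lambda^{n}\\
R_{21}\lambda^{-n}&(R_{21}\phi+R_{22})\lambda^{n}
\end{pmatrix},
\end{gather*}
with $R_{11},\dots,R_{22}$ the polynomial entries of~\eqref{R_explicit}. Demanding this to equal $I+\mathcal O\bigl(\lambda^{-1}\bigr)$ and reading off powers of~$\lambda$, the $(1,1)$ entry forces $R_{11}$ to be monic of degree $n$, i.e.\ $p_n^{(n)}=1$, while the $(2,1)$ entry is automatically $\mathcal O\bigl(\lambda^{-1}\bigr)$ since $\deg R_{21}\le n-1$. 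For the $(1,2)$ entry, vanishing of the coefficients of $\lambda^{-1},\dots,\lambda^{-n}$ in $R_{11}\phi$ gives $\sum_{j=0}^{n}p_j^{(n)}\phi_{j+m}=0$ for $m=1,\dots,n$, which with $p_n^{(n)}=1$ is exactly the Hankel system $\mathcal H_n\mathbf p_n=-(\phi_{n+1},\dots,\phi_{2n})^{T}$, whereas vanishing of the nonnegative powers of $R_{11}\phi+R_{12}$ fixes $\mathbf q_n$ by the triangular relations in~\eqref{p_n_eq}. Symmetrically, the $(2,2)$ entry must equal $\lambda^{-n}\bigl(1+\mathcal O(\lambda^{-1})\bigr)$; vanishing of $\lambda^{0},\dots,\lambda^{-(n-1)}$ together with normalisation of the $\lambda^{-n}$ coefficient to $1$ gives $\sum_{j=0}^{n-1}r_j^{(n)}\phi_{j+m}=\delta_{mn}$ for $m=1,\dots,n$, i.e.\ $\mathcal H_n\mathbf r_n=(0,\dots,0,1)^{T}$, and the remaining coefficients determine $\mathbf s_n$ as in~\eqref{p_n_eq}.

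Both linear systems share the coefficient matrix $\mathcal H_n$, so each is uniquely solvable precisely when $\triangle_n=\det\mathcal H_n\ne0$; this yields the ``if'' direction together with the stated explicit formulas. For the identification $r_{n-1}^{(n)}=c_{n-1}$ I would use the previous proposition, which gives $R_{21}=c_{n-1}\pi_{n-1}$ with $\pi_{n-1}$ monic, so that the leading coefficient of $R_{21}$ equals $c_{n-1}$; equivalently Cramer's rule applied to $\mathcal H_n\mathbf r_n=(0,\dots,0,1)^{T}$ gives $r_{n-1}^{(n)}=\triangle_{n-1}/\triangle_n$, matching $c_{n-1}$ through the standard norm identity for orthogonal polynomials.

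The hard part will be the necessity (the ``only if'') direction: one must rule out a solution of some other analytic shape when $\triangle_n=0$. Here I would lean on the uniqueness of the RH solution, already noted, to force any solution into the form~\eqref{Phi_reducible_ansatz}, so that its coefficient vectors must satisfy the two Hankel systems above, which is impossible unless $\triangle_n\ne0$. The remaining subtlety is to confirm that the branch singularities $\mathcal O\bigl((\lambda-\lambda_j)^{2\alpha_j}\bigr)$ impose no constraint beyond~\eqref{alpha_delta_assumption}, so that the reduction to the pair of Hankel systems is genuinely lossless.
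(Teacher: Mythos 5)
Your proposal is correct and follows essentially the same route as the paper: substitute the ansatz~\eqref{Phi_reducible_ansatz}, note that the jump and local boundedness are automatic, expand the second column at $\lambda=\infty$, and read off the two Hankel systems ${\mathcal H}_n{\bf p}_n=-(\phi_{n+1},\dots,\phi_{2n})^{\rm T}$ and ${\mathcal H}_n{\bf r}_n=(0,\dots,0,1)^{\rm T}$ whose solvability is governed by $\triangle_n$. Your treatment of the ``only if'' direction (any solution must reduce to these systems, and degeneracy of ${\mathcal H}_n$ then contradicts either existence or uniqueness) matches the paper's two-alternative argument, with the extra Cramer's-rule identification of $r_{n-1}^{(n)}$ being a harmless addition.
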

\begin{proof} Requiring that the product (\ref{Phi_reducible_ansatz}) has the canonical asymptotics
\begin{gather*}
R_n(\lambda) \begin{pmatrix} 1&\phi(\lambda) \\ 0&1 \end{pmatrix} =
\begin{pmatrix} \lambda^n+{\mathcal O}\big(\lambda^{n-1}\big)& {\mathcal O}\big(\lambda^{-n-1}\big)\\
{\mathcal O}\big(\lambda^{n-1}\big)& \lambda^{-n}+{\mathcal O}\big(\lambda^{-n-1}\big)
\end{pmatrix},
\end{gather*}
we find the following expansions in the second column of the product
\begin{gather}
(12)\colon\quad \sum_{k=0}^{n-1}q_k^{(n)}\lambda^k +\sum_{m=0}^n\sum_{l=1}^{\infty} \phi_lp_m^{(n)}\lambda^{m-l}\nonumber\\
\qquad\quad \ {} =\sum_{k=0}^{n-1}\lambda^k\left(q_k^{(n)}+\sum_{m=k+1}^n\phi_{m-k}p_m^{(n)}\right)+\sum_{k=1}^{\infty}\lambda^{-k}\sum_{m=0}^n\phi_{k+m}p_m^{(n)}, \nonumber\\
(22)\colon\quad \sum_{k=0}^{n-2}s_k^{(n)}\lambda^k +\sum_{m=0}^{n-1}\sum_{l=1}^{\infty} \phi_lr_m^{(n)}\lambda^{m-l}\nonumber\\
\qquad\quad \ {} =\sum_{k=0}^{n-2}\lambda^k\left(s_k^{(n)}+\sum_{m=k+1}^{n-1}\phi_{m-k}r_m^{(n)}\right)+\sum_{k=1}^{\infty}\lambda^{-k}\sum_{m=0}^{n-1}\phi_{k+m}r_m^{(n)}.\label{coeffs_eqs_ori}
\end{gather}
Thus expressions for $q_k^{(n)}$ and $s_k^{(n)}$ in (\ref{coeffs_eqs}) in terms of $p_m^{(n)}$ and $r_m^{(n)}$ come from (\ref{coeffs_eqs_ori}) at the non-negative degrees of $\lambda$.

Evaluating the prescribed terms at all orders from $\lambda^{-1}$ to $\lambda^{-n}$, we find equations for the vector coefficients ${\bf r}_n$ and ${\bf p}_n$. Combining them into the matrix form, it follows
\begin{gather}\label{coeffs_eqs}
{\mathcal H}_n{\bf p}_n
+(\phi_{n+1},\phi_{n+2},\dots,\phi_{2n})^{\rm T}=0, \qquad {\mathcal H}_n{\bf r}_n=(0,\dots,0,1)^{\rm T}.
\end{gather}
If $\triangle_n\neq0$, the moment matrix ${\mathcal H}_n$ is invertible, and the coefficient vectors ${\bf p}_n$, ${\bf r}_n$ are computed by~(\ref{p_n_eq}).

If $\triangle_n=0$ then for the equations (\ref{coeffs_eqs}) on the vectors ${\bf p}_n$ and ${\bf r}_n$, there are two alternatives. The first one implies that $R(\lambda)$ does not exist. The second alternative implies an infinite number of $R(\lambda)$ and therefore contradicts the uniqueness of solution to the RH Problem~\ref{RHP_Jacobi}.

This completes the proof.
\end{proof}

In the next proposition, we find the classical sixth Painlev\'e functions determined by the explicitly solvable RH Problem~\ref{RHP_Jacobi}. For instance, in the simplest cases,
\begin{gather*}
\text{if}\ n=0\colon\quad y=x+1 -\frac{(\delta-1)\phi_2}{\big(\delta-\frac{1}{2}\big)\phi_1}-\frac{f_1}{\delta-\frac{1}{2}},\nonumber\\
\text{if}\ n=1\colon\quad y=x+1 -\frac{\delta-1}{\delta-\frac{1}{2}}\frac{\left|\begin{matrix}\phi_1&\phi_2\\ \phi_3&\phi_4\end{matrix}\right|}{\left|
\begin{matrix}\phi_1&\phi_2\\ \phi_2&\phi_3 \end{matrix}\right|}+\frac{\delta}{\delta-\frac{1}{2}}\frac{\phi_2}{\phi_1}-\frac{f_1}{\delta-\frac{1}{2}}.
\end{gather*}

\begin{prop}\label{3.7}If $\triangle_n\triangle_{n+1}\neq0$ then the RH Problem~{\rm \ref{RHP_Jacobi}} determines the classical solution to \PVI\ corresponding to the~$\Psi$ function with the prescribed reducible monodromy data in terms of the moment functions~$\phi_k$, $k=1,\dots,\phi_{2n+2}$ as follows
\begin{gather}
y=x+1 -\frac{f_1}{\delta-\frac{1}{2}}-\frac{\delta-1}{\delta-\frac{1}{2}}\frac{\triangle_n}{\triangle_{n+1}}\phi_{2n+2}
+\frac{\delta-1}{\delta-\frac{1}{2}}\frac{\triangle_n}{\triangle_{n+1}}\begin{pmatrix}\phi_{n+2}&\dots&\phi_{2n+1}\end{pmatrix}{\mathcal H}_n^{-1}\begin{pmatrix}\phi_{n+1}\\ \vdots\\ \phi_{2n}\end{pmatrix}\nonumber\\
\hphantom{y=}{} +\frac{\delta}{\delta-\frac{1}{2}}\begin{pmatrix}
\phi_{n+1}&
\dots&
\phi_{2n}
\end{pmatrix}
{\mathcal H}_n^{-1}
\begin{pmatrix}
0\\
\vdots\\
0\\
1
\end{pmatrix}.\label{y_from_phi}
\end{gather}
\end{prop}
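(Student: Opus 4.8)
The plan is to extract the sixth Painlev\'e function from Jimbo's formula~\eqref{p6_via_RH_sol}, which expresses $y$ in terms of only three quantities read off from the expansion of $\Psi(\lambda)$ at infinity: the $\sigma_+$-components $(\psi_1)_+$, $(\psi_2)_+$ and the diagonal scalar $d_1$. First I would substitute the explicit solution $\Phi(\lambda)=R_n(\lambda)\left(\begin{smallmatrix}1&\phi(\lambda)\\ 0&1\end{smallmatrix}\right)$ of the RH Problem~\ref{RHP_Jacobi} into the factorization $\Psi(\lambda)=\Phi(\lambda)f^{\sigma_3}(\lambda)$, compute these three quantities, and then rewrite them through the moments~\eqref{phi_f_as} and the Hankel determinants $\triangle_n$, $\triangle_{n+1}$.

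For the asymptotic matching I would write $\Phi(\lambda)\lambda^{-n\sigma_3}=I+\Phi_1\lambda^{-1}+\Phi_2\lambda^{-2}+{\mathcal O}(\lambda^{-3})$, noting that $\det(\Phi\lambda^{-n\sigma_3})\equiv\det R_n\equiv1$ forces $\Phi_1$ to be traceless. Since $f^{\sigma_3}(\lambda)=\lambda^{(-\delta-n)\sigma_3}{\rm e}^{(\sum_kf_k\lambda^{-k})\sigma_3}$ commutes with the diagonal factors, one gets $\Psi\lambda^{\delta\sigma_3}=(I+\Phi_1\lambda^{-1}+\Phi_2\lambda^{-2}+\cdots){\rm e}^{(f_1\lambda^{-1}+\cdots)\sigma_3}$. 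Comparing with the normalized form~\eqref{Psi_p6_as_at_8}, in which $\psi_1,\psi_2$ are off-diagonal and the exponent is diagonal, and matching the orders $\lambda^{-1}$, $\lambda^{-2}$ in the diagonal and in the $\sigma_+$ parts, I expect to obtain $d_1=f_1+(\Phi_1)_{11}$, $(\psi_1)_+=(\Phi_1)_{12}$ and $(\psi_2)_+=(\Phi_2)_{12}+(\Phi_1)_{11}(\Phi_1)_{12}$.

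The remaining step is to evaluate these entries. From $\pi_n(\lambda)\lambda^{-n}=1+p_{n-1}^{(n)}\lambda^{-1}+\cdots$ one reads $(\Phi_1)_{11}=p_{n-1}^{(n)}$, while expanding the $(1,2)$ entry with $\phi(\lambda)=\sum_{k\ge1}\phi_k\lambda^{-k}$ and $p_n^{(n)}=1$ gives $(\Phi_1)_{12}=\sum_{m=0}^np_m^{(n)}\phi_{m+n+1}$ and $(\Phi_2)_{12}=\sum_{m=0}^np_m^{(n)}\phi_{m+n+2}$. Substituting ${\bf p}_n=-{\mathcal H}_n^{-1}(\phi_{n+1},\dots,\phi_{2n})^{\rm T}$ from~\eqref{p_n_eq} and applying the Schur-complement identity to the bordered matrix ${\mathcal H}_{n+1}$, I would identify $(\Phi_1)_{12}=\triangle_{n+1}/\triangle_n$ (so that $\triangle_{n+1}\ne0$ makes the ratio in~\eqref{p6_via_RH_sol} well defined), and likewise write $(\Phi_2)_{12}$ and $p_{n-1}^{(n)}$ as the moment-vector/${\mathcal H}_n^{-1}$ contractions appearing in~\eqref{y_from_phi}, using the symmetry of ${\mathcal H}_n$ for the latter. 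Feeding these into~\eqref{p6_via_RH_sol} and combining the two terms proportional to $p_{n-1}^{(n)}$ into $-\frac{\delta}{\delta-1/2}p_{n-1}^{(n)}$ should reproduce~\eqref{y_from_phi}; the classical nature of $y$ then follows from the reducibility of the monodromy~\cite{Mazz}.

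The main obstacle I anticipate is the bookkeeping in the asymptotic matching: one must correctly disentangle the off-diagonal coefficients $\psi_k$ from the diagonal exponential, tracking how the polynomial dressing $R_n$ shifts $d_1$ by $(\Phi_1)_{11}=p_{n-1}^{(n)}$ and, crucially, contributes the cross term $(\Phi_1)_{11}(\Phi_1)_{12}$ to $(\psi_2)_+$. Getting this cross term right, together with the cancellation of the $f_1$-contribution, is the delicate point; the Hankel/Schur identity $(\Phi_1)_{12}=\triangle_{n+1}/\triangle_n$, though standard, must also be checked in the conventions used here.
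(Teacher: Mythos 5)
Your proposal is correct and follows essentially the same route as the paper: both extract $y$ from~\eqref{p6_via_RH_sol} by expanding $\Psi=R_n\left(\begin{smallmatrix}1&\phi\\0&1\end{smallmatrix}\right)f^{\sigma_3}$ at infinity, identifying $d_1=f_1+p^{(n)}_{n-1}$, $(\psi_1)_+=\triangle_{n+1}/\triangle_n$ via the Schur complement of ${\mathcal H}_{n+1}$, and the cross term in $(\psi_2)_+$ (the paper writes it as $-e_1b_1$ with $e_1=-p^{(n)}_{n-1}$, which is exactly your $(\Phi_1)_{11}(\Phi_1)_{12}$). The delicate points you flag — disentangling the diagonal exponential, the cross term, and the symmetry of ${\mathcal H}_n$ used for $p^{(n)}_{n-1}$ — are precisely the steps carried out in the paper's computation~\eqref{part_dets}.
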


\begin{proof}According to (\ref{p6_via_RH_sol}), in order to find the Painlev\'e function $y$ we have to compute two first coefficients $(\psi_1)_+$ and $(\psi_2)_+$ along with the coefficient $d_1$ of the asymptotic expansion of~$\Psi(\lambda)$ at $\lambda\to\infty$, see (\ref{Psi_p6_as_at_8}). Using~(\ref{coeffs_eqs_ori}) and~(\ref{coeffs_eqs}) again, we find
\begin{gather*}
\Psi(\lambda)= R_n(\lambda) \begin{pmatrix} 1&\phi(\lambda)\\ 0&1 \end{pmatrix} f^{\sigma_3}(\lambda)\nonumber\\
\hphantom{\Psi(\lambda)}{}= \begin{pmatrix}
\displaystyle \sum_{k=0}^n \lambda^{-k} p_{n-k}^{(n)} &
\displaystyle \sum_{k=1}^{\infty} \lambda^{-k} \sum_{m=0}^n \phi_{k+n+m}p_m^{(n)}\vspace{1mm}\\
\displaystyle \sum_{k=1}^{n} \lambda^{-k} r_{n-k}^{(n)} &
\displaystyle 1 +\sum_{k=1}^{\infty} \lambda^{-k} \sum_{m=0}^{n-1} \phi_{k+n+m}r_m^{(n)}
\end{pmatrix}
{\rm e}^{\sum\limits_{k=1}^{\infty}\lambda^{-k}f_k\sigma_3} \lambda^{-\delta\sigma_3} \nonumber\\
\hphantom{\Psi(\lambda)}{} = \begin{pmatrix}
1 & \dfrac{1}{\lambda} (\psi_1)_+ +\dfrac{1}{\lambda^2} (\psi_2)_+ +{\mathcal O}\left(\dfrac{1}{\lambda^3}\right)\vspace{1mm}\\
\dfrac{1}{\lambda} r_{n-1}^{(n)} +{\mathcal O}\left(\dfrac{1}{\lambda^2}\right) & 1 \end{pmatrix} \nonumber\\
\hphantom{\Psi(\lambda)=}{}\times\left(I+{\mathcal O}\left(\frac{1}{\lambda^2}\right)\right){\rm e}^{\frac{1}{\lambda}(f_1+p_{n-1}^{(n)})\sigma_3}
\lambda^{-\delta\sigma_3} ,
\end{gather*}
where
\begin{gather}
(\psi_1)_+= \sum_{m=0}^n \phi_{n+m+1}p_m^{(n)}= -\begin{pmatrix} \phi_{n+1}& \phi_{n+2}& \cdots& \phi_{2n} \end{pmatrix}
{\mathcal H}_n^{-1} \begin{pmatrix} \phi_{n+1}\\ \phi_{n+2}\\ \vdots\\ \phi_{2n} \end{pmatrix} +\phi_{2n+1}= \frac{\triangle_{n+1}}{\triangle_n},\nonumber\\
(\psi_2)_+= \sum_{m=0}^n \phi_{n+m+2}p_m^{(n)} -\sum_{m=0}^{n-1} \phi_{n+m+1}r_m^{(n)} \sum_{k=0}^n \phi_{n+k+1}p_k^{(n)}\nonumber\\
\hphantom{(\psi_2)_+}{} = \phi_{2n+2}
+\begin{pmatrix} \phi_{n+2}& \phi_{n+3}&\dots& \phi_{2n+1} \end{pmatrix} {\bf p}_n -\phi_{2n+1}
\begin{pmatrix} \phi_{n+1}& \phi_{n+2}&\dots& \phi_{2n} \end{pmatrix} {\bf r}_n \nonumber\\
\hphantom{(\psi_2)_+=}{} -\begin{pmatrix} \phi_{n+1}& \phi_{n+2}& \dots& \phi_{2n} \end{pmatrix} {\bf r}_n \cdot
\begin{pmatrix} \phi_{n+1}& \phi_{n+2}& \dots& \phi_{2n} \end{pmatrix} {\bf p}_n \nonumber\\
\hphantom{(\psi_2)_+}{} = \phi_{2n+2} -\begin{pmatrix} \phi_{n+2}& \phi_{n+3}&\dots& \phi_{2n+1} \end{pmatrix} {\mathcal H}_n^{-1}
\begin{pmatrix} \phi_{n+1}\\ \vdots\\ \phi_{2n} \end{pmatrix}\nonumber\\
\hphantom{(\psi_2)_+=}{} -\frac{\triangle_{n+1}}{\triangle_n} \begin{pmatrix} \phi_{n+1}& \phi_{n+2}& \dots& \phi_{2n} \end{pmatrix}{\mathcal H}_n^{-1}
\begin{pmatrix} 0\\ \vdots\\ 0\\ 1 \end{pmatrix},\nonumber\\
p_{n-1}^{(n)}= - \begin{pmatrix} \phi_{n+1}& \dots& \phi_{2n} \end{pmatrix} {\mathcal H}_n^{-1}
\begin{pmatrix} 0\\ \vdots\\ 0\\ 1 \end{pmatrix}.\label{part_dets}
\end{gather}
Finally, elementary manipulations yield (\ref{y_from_phi}).
\end{proof}

\begin{prop} If $\triangle_{n}\neq0$ and $\triangle_{n+1}=0$, then the RH Problem~{\rm \ref{RHP_Jacobi}} determines a polynomial solution of the Heun equation $($the Heun polynomial$)$ of degree~$n$.
\end{prop}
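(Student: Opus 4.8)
The plan is to extract the Heun polynomial directly from the solution of the RH Problem~\ref{RHP_Jacobi}, using the hypothesis $\triangle_{n+1}=0$ only to certify the extra triangularity that promotes this solution to a solution of the Heun RH Problem~\ref{RHP_at_pole_sigma+}. First, since $\triangle_n\neq0$, the solvability criterion established above guarantees a unique solution $\Psi=\Phi f^{\sigma_3}$ of the RH Problem~\ref{RHP_Jacobi}, with $\Phi=R_n\left(\begin{smallmatrix}1&\phi\\0&1\end{smallmatrix}\right)$ and with the first column of $\Phi$ consisting of the polynomials $\pi_n$ (monic, of degree exactly $n$) and $c_{n-1}\pi_{n-1}$ (of degree $n-1$); in particular $\Phi_{11}=\pi_n$.

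Next I would invoke the determinantal identity \eqref{part_dets} obtained in the proof of Proposition~\ref{3.7}, which expresses the $\sigma_+$-component of the leading asymptotic coefficient of $\Psi$ at infinity as $(\psi_1)_+=\triangle_{n+1}/\triangle_n$. Because $\triangle_{n+1}=0$ while $\triangle_n\neq0$, this gives $(\psi_1)_+=0$, which is precisely condition~$(2)$ of the RH Problem~\ref{RHP_at_pole_sigma+}. Hence the same $\Psi$ solves the Heun RH Problem, the rational matrix $A=\Psi_\lambda\Psi^{-1}$ loses the $\lambda$-term in the numerator of its $\sigma_+$-part and acquires the structure \eqref{A_at_simple_pole+_p6} with no apparent singularity, and by the reduction \eqref{u_+_def} the first row of $\Psi$ yields a fundamental system of solutions of the Heun equation \eqref{Heun_gen_can}.

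It then remains to single out the polynomial solution. Here I would use $\Psi_{11}=\Phi_{11}f=\pi_n f$, so that the scalar gauge transform $u\mapsto f^{-2}u$ applied to the Heun solution $u=f\Psi_{11}=f^2\pi_n$ returns exactly $\pi_n$. Tracking the characteristic exponents through this gauge sends the data of \eqref{Heun_gen_can} to a general Heun equation whose exponents at $0,a,1$ are $\{0,-2\alpha_1\}$, $\{0,-2\alpha_2\}$, $\{0,-2\alpha_3\}$ and whose exponent $-n$ at infinity is realized by $\pi_n$; that polynomial is the desired Heun polynomial of degree $n$.

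The hard part will be the final bookkeeping of the third paragraph rather than anything in the RH analysis. One must check that the reduction \eqref{u_+_def} produces, from the first row of $\Psi$, the two \emph{non}-polynomial Heun solutions $f^2\pi_n$ and the Cauchy transform, and that it is only after the compensating gauge $u\mapsto f^{-2}u$ that one lands on the polynomial branch; and one must verify that this gauge yields a bona fide general Heun equation carrying the exponent $-n$ at infinity, with accessory parameter precisely the value selected by the quantization $\triangle_{n+1}=0$ (equivalently, that this condition is exactly what makes the terminating Frobenius series at infinity a genuine solution rather than one attached to a spurious apparent singularity). By contrast, the degree of $\pi_n$ being exactly $n$ is immediate from its monicity together with $\triangle_n\neq0$.
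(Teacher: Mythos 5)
Your proposal is correct and follows essentially the same route as the paper: the decisive step in both is the identity $(\psi_1)_+=\triangle_{n+1}/\triangle_n$ from \eqref{part_dets}, so that $\triangle_{n+1}=0$ with $\triangle_n\neq0$ verifies condition~(2) of the Heun RH Problem~\ref{RHP_at_pole_sigma+}, after which the polynomial is read off from the first column of $\Phi$. Your third paragraph (the gauge $u\mapsto f^{-2}u$ turning $u_1=f^2\pi_n$ into the polynomial branch $\pi_n$ of an equivalent general Heun equation with exponents $\{0,-2\alpha_j\}$ at the finite singularities and $-n$ at infinity) is a correct and welcome elaboration of a step the paper leaves implicit when it asserts that the polynomial solution is $\Phi_{11}(\lambda)$.
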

\begin{proof}The coefficient $(\psi_1)_+$ is computed in (\ref{part_dets}),
\begin{gather}\label{zeros_for_Heun_n}
(\psi_1)_+=\frac{\triangle_{n+1}}{\triangle_{n}}.
\end{gather}
The RH Problem~\ref{RHP_Jacobi} transforms to the Heun RH Problem~\ref{RHP_at_pole_sigma+} if $(\psi_1)_+ = 0$~-- condition (2) of the RH Problem~\ref{RHP_at_pole_sigma+}. In virtue of~(\ref{zeros_for_Heun_n}) this happens only if $\triangle_{n+1} =0$.
\end{proof}

The equation
\begin{gather}\label{hpol}
\triangle_{n+1} =0
\end{gather}
is an equation on the variable $\lambda_2 = x$. This equation determines those $a=x$ for which the Heun equation with the given $\alpha_j$, $\delta$ satisfying~(\ref{alpha_delta_assumption}) admits polynomial solutions given by~$\Phi_{11}(\lambda)$. Alternatively, one can keep $x=a$ free, then (\ref{hpol}) would be equation on the parameters~$s_1$ and~$s_3$ of the monodromy matrices~(\ref{reducible_monodromy}). This in turn would yield a restriction on the accessory parameter of the Heun equation which would guarantee the existence of its polynomial solutions for given~$a$.

\begin{rem} Determinantal conditions on the accessory parameter yielding existence of the polynomial solution of the Heun equation has been known. It would be of course interesting to compare those with the condition which comes from~(\ref{hpol}).
\end{rem}

\begin{rem} There is a vast literature devoted to the exact solutions of the Heun equation~-- see work \cite{M} and references therein. We also want to mention the works \cite{Sm} and \cite{Tak}. In \cite{Sm}, the indicated at the beginning of this paper relation of the Heun equation and the Lame equation has been used to obtain the solvable Heun equations and their polynomial solutions from Lame polynomials corresponding to the finite-gap Lame potentials. In~\cite{Tak}, the hypergeometric type integral representations for the solutions of the Heun equation are found in the case when one of the singularities $\{0, 1, t, \infty\}$ is apparent. It would be important to understand these explicit solutions within the Riemann--Hilbert formalism which we are developing in this article.
\end{rem}

\appendix
\section[Parameterizations of the coefficient matrix for P$_{\text{\rm\scriptsize VI}}$ and GHE]{Parameterizations of the coefficient matrix for P$\boldsymbol{_{\text{\rm\bf \scriptsize VI}}}$ and GHE}\label{appendixA}

\subsection[Parametrization of $A(\lambda)$ for P$_{\text{\rm\scriptsize VI}}$]{Parametrization of $\boldsymbol{A(\lambda)}$ for P$\boldsymbol{_{\text{\rm\bf \scriptsize VI}}}$}\label{A}

Introducing ${\mathfrak s}{\mathfrak l}_2({\mathbb C})$ generators $\sigma_3=\left(\begin{smallmatrix} 1&0\\ 0&-1 \end{smallmatrix}\right)$, $\sigma_+=\left( \begin{smallmatrix}0&1\\ 0&0\end{smallmatrix}\right)$, $\sigma_-=\left(\begin{smallmatrix} 0&0\\ 1&0\end{smallmatrix}\right)$, and taking into account the simplifying assumption that the rational coefficient matrix $A(\lambda)$ is diagonal at infinity, consider the parameterization similar to the one used in~\cite{JM}{\samepage
\begin{gather}\label{parameterization_p6}
A(\lambda)=\frac{-\delta(\lambda-y)^2+p(\lambda-y)+z}{\lambda(\lambda-1)(\lambda-x)}\sigma_3+\frac{\kappa(\lambda-y)}{\lambda(\lambda-1)(\lambda-x)}\sigma_++\frac{\tilde\kappa(\lambda-\tilde y)} {\lambda(\lambda-1)(\lambda-x)}\sigma_-.
\end{gather}
The residue matrices $A_j$ in (\ref{Lax_pair_p6}) have the eigenvalues $\pm\alpha_j$, $j=1,2,3$ respectively.}

As soon as $\delta\neq0$, the parameters $p$, $\tilde y$ and $\tilde\kappa$ can be expressed in terms of the parameters $x$, $y$, $z$, $\kappa$, $\delta$ and the eigenvalues $\alpha_j$, $j=1,2,3$,
\begin{gather*}
p=\frac{\alpha_1^2x}{2\delta y}-\frac{\alpha_3^2(x-1)}{2\delta(y-1)}+\frac{\alpha_2^2x(x-1)}{2\delta(y-x)}-\frac{1}{2}\delta(3y-x-1)-\frac{z^2}{2\delta y(y-1)(y-x)},\\
\tilde y= (y-1)\big(\alpha_1^2x^2-\big(\delta y^2+p y-z\big)^2\big)
\bigl\{ \alpha_1^2 x^2(y - 1) -\alpha_3^2(x - 1)^2 y +(\delta-p)^2 y -4\delta^2 y^2\\
\hphantom{\tilde y=}{} +6\delta p y^2 -p^2 y^2 +6\delta^2 y^3 -4\delta p y^3 -3\delta^2 y^4 -2\delta y z +2\delta y^2 z +z^2 \bigr\}^{-1}, \\
\tilde\kappa= \frac{\alpha_1^2x^2-\big(\delta y^2+p y-z\big)^2} {\kappa y\tilde y}.
\end{gather*}
Considering $y$, $z$ and $\kappa$ as the differentiable functions of $x$, they satisfy the system
\begin{gather}
\frac{\dd(\ln\kappa)}{\dd x}= (2\delta-1)\frac{y-x}{x(x-1)},\nonumber\\
\frac{\dd y}{\dd x}=\frac{y^2-y+2z}{x(x-1)},\nonumber\\
\frac{\dd z}{\dd x} =\frac{1}{x(x-1)y(y-1)(y-x)} \bigl\{ z^2\big(x-2y-2xy+3y^2\big)+zy(y-1)(y-x)(y+x-1) \nonumber\\
\hphantom{\frac{\dd z}{\dd x} =}{} -\alpha_1^2x(y-1)^2(y-x)^2 +\alpha_3^2(x-1)y^2(y-x)^2 -\alpha_2^2x(x-1)y^2(y-1)^2\nonumber\\
\hphantom{\frac{\dd z}{\dd x} =}{} +\delta(\delta-1)y^2(y-1)^2(y-x)^2 \bigr\}.\label{diff_system_p6}
\end{gather}

Finally, eliminating $z$, one arrives at the second order ODE for $y$
\begin{gather*}
y_{xx}=\frac{1}{2} \left( \frac{1}{y} +\frac{1}{y-1} +\frac{1}{y-x}\right) y_x^2 - \left( \frac{1}{x} +\frac{1}{x-1} +\frac{1}{y-x}\right)y_x\\
\hphantom{y_{xx}=}{}
+\frac{y(y-1)(y-x)}{x^2(x-1)^2} \left\{2\left(\delta-\frac{1}{2}\right)^2 -2\alpha_1^2\frac{x}{y^2} +2\alpha_3^2\frac{x-1}{(y-1)^2}
-2\left(\alpha_2^2-\frac{1}{4}\right)\frac{x(x-1)}{(y-x)^2} \right\}, \nonumber
\end{gather*}
which is the classical equation \PVI.

\subsection[Parameterization of $A(\lambda)$ at the pole of $y(x)$ as $\delta\neq0,\frac{1}{2}$, and $\sigma=+1$]{Parameterization of $\boldsymbol{A(\lambda)}$ at the pole of $\boldsymbol{y(x)}$ as $\boldsymbol{\delta\neq0,\frac{1}{2}}$, and $\boldsymbol{\sigma=+1}$}\label{A1}

The coefficients of the matrix $A(\lambda)$ (\ref{A_at_simple_pole+_p6}) are expressed explicitly in terms of the pole position $a$, free parameter~$c_0$ of the Laurent expansion at the pole and the local monodromies $\alpha_j$, $j=1,2,3$, as follows
\begin{gather}
a_3=-\delta,\qquad b_3=c_0(2\delta-1)+1-a,\nonumber\\
c_3=\frac{1}{2\delta} \bigl[ (\delta - 1-c_0(2\delta - 1))^2 -a\alpha_1^2 +(a-1)\alpha_3^2 -a(a-1)\alpha_2^2\nonumber\\
\hphantom{c_3=}{} +a\big(\delta^2-2-2c_0\big(2\delta^2+\delta-1\big)\big) + a^2(\delta+1)^2\bigr],\nonumber\\
c_+=-\kappa_0 \frac{a(a-1)}{2\delta-1}, \nonumber\\
b_-= \frac{2\delta-1}{\kappa_0\delta a(a-1)} \bigl\{ a^3\big(\alpha_2^2-(1+\delta)^2\big) +a^2 \bigl[
3-2\alpha_2^2-3c_0+\alpha_2^2c_0+2\delta+\alpha_2^2\delta \nonumber\\
\hphantom{b_-=}{}+2c_0\delta-2\alpha_2^2c_0\delta -2\delta^2 +7c_0\delta^2-\delta^3+2c_0\delta^3 +\alpha_1^2(1+\delta)
-\alpha_3^2(1+\delta) \bigr]\nonumber\\
\hphantom{b_-=}{} -a \bigl[ \alpha_1^2(1-\delta+c_0(2\delta-1) -\alpha_3^2(2+\delta+c_0(2\delta-1))-\alpha_2^2(1-\delta+c_0(2\delta-1))
\nonumber\\
\hphantom{b_-=}{} +(1-\delta+c_0(2\delta-1)) \big[ 3+\delta-\delta^2+c_0\big({-}3+4\delta+4\delta^2\big) \big] \bigr]\nonumber\\
\hphantom{b_-=}{} +(1+c_0(2\delta-1)) \big[{-}\alpha_3^2+(-1+c_0+\delta-2c_0\delta)^2 \big] \bigr\},\nonumber\\ 
c_-=\frac{2\delta-1}{4\kappa_0\delta^2a(a-1)} \bigl\{ a^4\bigl(\alpha_2^2 - (\delta + 1)^2\bigr)^2
-2a^3\bigl(\alpha_2^2 - (\delta + 1)^2\bigr) \bigl({-}2-\alpha_1^2+\alpha_3^2+\alpha_2^2 +\delta^2\nonumber\\
\hphantom{c_-=}{} +2c_0\big(1-\delta-2\delta^2\big)\bigr) +2a\bigl(\alpha_3^2-(\delta-1+c_0(1-2\delta))^2\bigr)
\bigl(2+\alpha_1^2-\alpha_3^2-\alpha_2^2-\delta^2 \nonumber\\
\hphantom{c_-=}{} +2c_0\big({-}1+\delta+2\delta^2\big) \bigr) +\bigl( \alpha_3^2-(\delta-1+c_0(1-2\delta))^2 \bigr)^2 \bigr\}.\label{A_at_simple_pole+_p6_coeffs}
\end{gather}

\subsection[Parameterization of $A(\lambda)$ at the pole of $y(x)$ as $\delta\neq0,\frac{1}{2},1$, and $\sigma=-1$]{Parameterization of $\boldsymbol{A(\lambda)}$ at the pole of $\boldsymbol{y(x)}$ as $\boldsymbol{\delta\neq0,\frac{1}{2},1}$, and $\boldsymbol{\sigma=-1}$}\label{A2}

The constant parameters in (\ref{A_regularized_p_at_simple_p6}) are determined by the parameters of the Laurent expansion as follows
\begin{gather*}
\hat a_3=-\delta+1,\qquad \hat b_3=a-1+c_0(2\delta-1), \nonumber\\
\hat c_3=\frac{1}{2(\delta-1)} \bigl\{ a^2\bigl[(\delta-2)^2-\alpha_2^2\bigr] +a\bigl[{-}\alpha_1^2+\alpha_3^2+\alpha_2^2
+(\delta-1)^2-2 -c_0\big(4(\delta-1)^2-2\delta\big) \bigr]\nonumber\\
\hphantom{\hat c_3=}{} -\alpha_3^2+(\delta-c_0(2\delta-1))^2 \bigr\},\nonumber\\
\hat b_+=\frac{\kappa_0}{a(a-1)(\delta-1)(2\delta-1)} \bigl\{ a^3\big({-}\alpha_2^2+(\delta-2)^2\big)+a^2\bigl[
\alpha_2^2-2+c_0\big(\alpha_2^2-8\big)\nonumber\\
\hphantom{\hat b_+=}{} +(\delta-2)\big(\alpha_1^2-\alpha_3^2\big) +\delta\big(\alpha_2^2-5\big) +2c_0\delta\big(11-\alpha_2^2\big) +c_0\delta^2(2\delta-13) -\delta^2(\delta-5) \bigr]\nonumber\\
\hphantom{\hat b_+=}{} -(c_0(2\delta-1)-1)\big(\alpha_3^2-(c_0+\delta-2c_0\delta)^2\big)
-a\bigl[ c_0^2(1-2\delta)^2(2\delta-5)\nonumber\\
\hphantom{\hat b_+=}{} -\delta\big(3+\alpha_1^2-\alpha_2^2+\delta-\delta^2\big) +c_0(2\delta-1)
\big(3+\alpha_1^2-\alpha_2^2+6\delta-3\delta^2\big)\nonumber\\
\hphantom{\hat b_+=}{} +\alpha_3^2(3-\delta-c_0(2\delta-1)) \bigr] \bigr\},\nonumber\\
\hat c_+=\frac{\kappa_0}{4a(a-1)(\delta-1)^2(2\delta-1)} \bigl\{ a^4\big(\alpha_2^2-(\delta-2)^2\big)^2\nonumber\\
\hphantom{\hat c_+=}{} -2a^3\big(\alpha_2^2-(\delta-2)^2\big) \bigl[ {-}1-2\delta+\delta^2 -\alpha_1^2+\alpha_3^2+\alpha_2^2 +c_0\big({-}4+10\delta-4\delta^2\big) \bigr]\nonumber\\
\hphantom{\hat c_+=}{} +a^2\bigl[ \alpha_1^4+\alpha_3^4+\big(\alpha_2^2-1\big)^2 +8c_0\big(1-\alpha_2^2\big) +c_0^2\big(24-2\alpha_2^2\big)+4\delta\big(1-\alpha_2^2\big)\nonumber\\
\hphantom{\hat c_+=}{} +c_0\delta\big(12+16\alpha_2^2\big) +c_0^2\delta\big({-}120+8\alpha_2^2\big)
+\delta^2\bigl(10 -88c_0 +c_0^2\big(198-8\alpha_2^2\big) \bigr)\nonumber\\
\hphantom{\hat c_+=}{}
+\delta^3\bigl({-}12 +72c_0 -120c_0^2 \bigr) +\delta^4\big(3-16c_0+24c_0^2\big) -2\alpha_1^2\big(1+\alpha_3^2+\alpha_2^2 -4c_0\nonumber\\
\hphantom{\hat c_+=}{} -6\delta+10c_0\delta+3\delta^2 - 4c_0\delta^2\big) +2\alpha_3^2\big({-}5+2\alpha_2^2+2\delta -2c_0\big(2-5\delta+2\delta^2\big)\big) \bigr]\nonumber\\
\hphantom{\hat c_+=}{} +2a\big(\alpha_3^2-(c_0+\delta-2c_0\delta)^2\big) \bigl[ 1+2\delta-\delta^2 +\alpha_1^2-\alpha_3^2-\alpha_2^2
+c_0\big(4-10\delta+4\delta^2\big) \bigr]\nonumber\\
\hphantom{\hat c_+=}{} +\big(\alpha_3^2-(c_0+\delta-2c_0\delta)^2\big)^2 \bigr\},\nonumber\\
\hat c_-=-\frac{a(a-1)(2\delta-1)}{\kappa_0}.
\end{gather*}

\subsection[Parameterization of $A(\lambda)$ at the pole of $y(x)$ as $\delta=1$ and $\sigma=-1$]{Parameterization of $\boldsymbol{A(\lambda)}$ at the pole of $\boldsymbol{y(x)}$ as $\boldsymbol{\delta=1}$ and $\boldsymbol{\sigma=-1}$} \label{A3}

The parameters in the coefficient matrix (\ref{A_regularized_p_at_simple_delta=1_p6}) are given by
\begin{gather*}
\check a_3=-\frac{1}{2}=-\delta+\frac{1}{2},\qquad \check b_3=c_0+\frac{3}{2}a-\frac{1}{2},\qquad \check c_3=a\left(\alpha_1-\frac{1}{2}\right),\\
\check b_+=\frac{\kappa_0}{a(a-1)}\bigl(a^2\big(1-\alpha_2^2\big) -a\big(2+\alpha_1^2-\alpha_3^2-\alpha_2^2-2c_0\big) -\alpha_3^2 +(c_0-1)^2
\bigr),\\
\check c_+=\kappa_0\left({-}\alpha_3^2 +\alpha_2^2 +\frac{a+1}{a-1}\alpha_1^2 +2\alpha_1 \left(\frac{c_0}{a-1}+1\right)\right),\qquad
\check b_-=-\frac{a(a-1)}{\kappa_0}.
\end{gather*}

\subsection[Parameterization of $A(\lambda)$ at the pole of $y(x)$ as $\delta=\frac{1}{2}$]{Parameterization of $\boldsymbol{A(\lambda)}$ at the pole of $\boldsymbol{y(x)}$ as $\boldsymbol{\delta=\frac{1}{2}}$} \label{A4}

The coefficients of $A(\lambda)$ (\ref{A_regularized_double_p6}) in terms of the monodromy exponents, the pole position $a\neq0,1$ and the arbitrary coefficient $c_{-2}\neq0$ of the Laurent expansion~(\ref{double_pole_p6}) are as follows
\begin{gather}
\tilde a_3=-\frac{3}{2}=-\delta-1,\qquad \tilde b_3=a+1-\frac{a^2(a-1)^2}{2c_{-2}},\nonumber\\
\tilde c_3= \frac{a^4(a-1)^4}{12c_{-2}^2} +\frac{a^2(a-1)^2(a+1)}{6c_{-2}} \nonumber\\
\hphantom{\tilde c_3=}{} +\frac{1}{12}\bigr(a^2\big(1-4\alpha_2^2\big) +a\big({-}7-4\alpha_1^2+4\alpha_3^2+4\alpha_2^2\big) +1-4\alpha_3^2
\bigr),\nonumber\\
\tilde c_+=-\frac{\kappa_0c_{-2}}{a^2(a-1)^2}, \nonumber\\
\tilde b_-= \frac{a^8(a-1)^8}{12\kappa_0c_{-2}^4} -\frac{(a-1)^4a^4}{12\kappa_0c_{-2}^2} \bigl(3\big(a^2-a+1\big) +4a\alpha_1^2 -4(a-1)\alpha_3^2
+4a(a-1)\alpha_2^2 \bigr)\nonumber\\
\hphantom{\tilde b_-=}{} +\frac{(a-1)^2a^2}{12\kappa_0c_{-2}} \bigl( -2a^3+3a^2+3a-2 -4a(a+1)\alpha_1^2 +4\big(a^2-3a+2\big)\alpha_3^2 \nonumber\\
\hphantom{\tilde b_-=}{} +4a\big(2a^2-3a+1\big)\alpha_2^2\bigr),\nonumber\\
\tilde c_-= \frac{a^{10}(a-1)^{10}}{144\kappa_0c_{-2}^5} +\frac{a^8(a-1)^8(a+1)}{36\kappa_0c_{-2}^4}\nonumber\\
\hphantom{\tilde c_-=}{} -\frac{a^6(a-1)^6}{72\kappa_0c_{-2}^3} \bigl({-}3\big(a^2-a+1\big) +4a\alpha_1^2 -4(a-1)\alpha_3^2 +4a(a-1)\alpha_2^2
\bigr) \nonumber\\
\hphantom{\tilde c_-=}{} -\frac{a^4(a-1)^4(a+1)}{36\kappa_0c_{-2}^2} \bigl({-}a^2+7a-1 +4a\alpha_1^2 -4(a-1)\alpha_3^2 +4a(a-1)\alpha_2^2\bigr) \nonumber\\
\hphantom{\tilde c_-=}{} +\frac{a^2(a-1)^2}{144\kappa_0c_{-2}} \bigl( \big(1-4\alpha_3^2\big)^2 +a^4\big(1-4\alpha_2^2\big)^2 +2a\big({-}1+4\alpha_3^2\big)\big(7+4\alpha_1^2-4\alpha_3^2 -4\alpha_2^2\big)\nonumber\\
\hphantom{\tilde c_-=}{} +2a^3\big(7+4\alpha_1^2-4\alpha_3^2-4\alpha_2^2\big) \big({-}1+4\alpha_2^2\big) +a^2\big(51+16\alpha_1^4+16\alpha_3^4-64\alpha_2^2 +16\alpha_2^4\nonumber\\
\hphantom{\tilde c_-=}{} +64\alpha_3^2\big({-}1+\alpha_2^2\big) -8\alpha_1^2\big(11+4\alpha_3^2+4\alpha_2^2\big)\big)\bigr).\label{hatA_coeffs_double_p6}
\end{gather}

\subsection[Asymptotic coefficients of $\Psi_{\infty}(\lambda)$ as $\lambda\to\infty$]{Asymptotic coefficients of $\boldsymbol{\Psi_{\infty}(\lambda)}$ as $\boldsymbol{\lambda\to\infty}$}\label{psi_k_p6}

The asymptotic parameters in (\ref{Psi_p6_as_at_8}) are expressed as follows
\begin{gather}
\delta\neq\pm\frac{1}{2},\pm1\colon \quad \psi_1=\frac{\kappa}{2\delta-1}\sigma_+-\frac{\tilde\kappa}{2\delta+1}\sigma_-,\qquad
d_1=-p-\delta(2y-x-1),\nonumber\\
\psi_2=\kappa\frac{2p+(2\delta+1)y-x-1}{4(\delta-1)\big(\delta-\frac{1}{2}\big)}\sigma_+-\tilde\kappa\frac{2p-(2\delta+1)\tilde y+x+1+4\delta y}{4(\delta+1)\big(\delta+\frac{1}{2}\big)}\sigma_-,\nonumber\\
d_{21}=\frac{\kappa\tilde\kappa}{2(1+2\delta)}+\frac{1}{2}\bigl(p(y-x-1)+\delta\big((y-x-1)^2-x\big)-z\bigr),\nonumber\\
d_{22}=\frac{\kappa\tilde\kappa}{2(1-2\delta)} -\frac{1}{2}\bigl(p(y-x-1)+\delta\big((y-x-1)^2-x\big)-z\bigr).\label{deltaneq1/2_1_Psi_p6_as_at_8}
\end{gather}

\subsection*{Acknowledgments}
A.K.~was supported by the project SPbGU 11.38.215.2014. He also thanks the of SISSA where this project was originated. Many thanks to the anonymous referees for their suggestions towards improving the manuscript.

\pdfbookmark[1]{References}{ref}
\LastPageEnding

\end{document}